 \newtheorem{thm}{Theorem}[section]
 \newtheorem{cor}[thm]{Corollary}
 \newtheorem{lem}[thm]{Lemma}
 \theoremstyle{definition}
 \theoremstyle{remark}
 \numberwithin{equation}{section}
\begin{document}

%
%
%
%
%
%
%
%
%

\title[Complete bounded $\lambda$-hypersurfaces]
 {Complete bounded $\lambda$-hypersurfaces in the weighted volume-preserving mean curvature flow}

\author[Yecheng Zhu]{Yecheng Zhu}

\address{%
1. Department of Mathematics, University of Science and Technology of China,
230026, Hefei, Anhui Province, People's Republic of China \\
2. Department of Applied Mathematics, Anhui University of Technology,
243002, Maanshan, Anhui Province, People's Republic of China }
\email{
zhuyc929@mail.ustc.edu.cn }

\thanks{This work was supported by the National Natural Science Foundation of China (No. 11271343). }

\author{Yi Fang}
\address{Department of Applied Mathematics, Anhui University of Technology,
243002, Maanshan, Anhui Province, People's Republic of China}
\email{yif1915@ahut.edu.cn}
\author{Qing Chen}
\address{Department of Mathematics, University of Science and Technology of China,
230026, Hefei, Anhui Province, People's Republic of China}
\email{qchen@ustc.edu.cn}

\subjclass{Primary 53C42; Secondary 53C44}

\keywords{Volume comparison theorem, Topology, Second fundamental form, $\infty$ - Bakry - Emery Ricci tensor, Mean curvature flow}

\date{September 24, 2016}

\begin{abstract}
In this paper, we study the complete bounded $\lambda$-hypersurfaces in weighted volume-preserving mean curvature flow. Firstly, we investigate the volume comparison theorem of complete bounded $\lambda$-hypersurfaces with $|A|\leq\alpha$ and get some applications of the volume comparison theorem. Secondly, we consider the relation among $\lambda$, extrinsic radius $k$, intrinsic diameter $d$, and dimension $n$ of the complete $\lambda$-hypersurface, and we obtain some estimates for the intrinsic diameter and the extrinsic radius. At last, we get some topological properties of the bounded $\lambda$-hypersurface with some natural and general restrictions.
\end{abstract}

\maketitle
\section{Introduction}
A hypersurface $X : M^n \rightarrow R^{n+1}$ is said to be a self-shrinker in $R^{n+1}$ if it satisfies the following equation (see [10]) for the mean curvature and the normal
\begin{equation}
H-\frac{<X, N>}{2}=0.
\end{equation}
Self-shrinkers play an important role in the study of the mean curvature flow. Not only they correspond to the self-shrinking solutions to mean
curvature flow, but also they describe all possible blow ups at a given singularity points of the mean curvature flow. The simplest self-shrinkers are $R^n$, the sphere of radius $\sqrt{2n}$ and more generally cylindrical products $S^k \times R^{n-k}$ (where $S^k$ has radius $\sqrt{2k}$). All of these examples are mean
convex. Without the assumption on mean convexity, there are expected to be many more examples of self-shrinkers in $R^3$. In particular, Angenent constructed a self-shrinking torus (¡°shrinking donut¡±) of revolution in [2], and there is numerical evidence for a number of other examples (see [3], [9], [24]). We refer the readers to [10, 11, 15, 16] and references therein for more information on self-shrinkers and singularities of mean curvature flow.

As generalizations of self-shrinkers, $\lambda$-hypersurfaces were first introduced by Cheng and Wei in [8], where they
proved that $\lambda$-hypersurfaces are critical points of the weighted area functional for the weighted
volume-preserving variations. Furthermore, they classified the complete $\lambda$-hypersurfaces with polynomial volume growth and studied F-stability of $\lambda$-hypersurfaces, which are generalizations of the results due to Huisken [15] and Colding-Minicozzi [10]. Guang proved some gap theorems and Bernstein type theorems for complete $\lambda$-hypersurfaces with polynomial volume growth in terms of the norm of the second fundamental form in [14].
More results on $\lambda$-hypersurfaces can be found in [6, 19, 25, etc.].

We follow the notations of [14, 19] and call a hypersurface $X: M^n \rightarrow R^{n+1}$ a $\lambda$-hypersurface if
it satisfies the curvature condition
\begin{equation}
H-\frac{<N,X>}{2}=\lambda,
\end{equation}
where $\lambda$ is a constant, $N$ is the unit normal vector of $X$ and $H$ is the mean curvature of $M$. One can prove that $\lambda$-hypersurface is a hypersurface with constant mean curvature $\lambda$ in $R^{n+1}$ with respect to the metric $g_{ij} = e^{-\frac{|X|^2}{4}}\delta_{ij}$.

In this paper, we study the volume comparison theorem and topology of complete bounded $\lambda$-hypersurfaces in the weighted volume-preserving mean curvature flow. The organization of this article is as follows:
In section 2, we recall some backgrounds and derive some formulas for $\lambda$-hypersurfaces.
In section 3, we investigate the volume comparison theorem of complete bounded $\lambda$-hypersurfaces with $|A|\leq\alpha$.
In section 4, we give some applications of the volume comparison theorem of $\lambda$-hypersurfaces.
In section 5, we study the relation among $\lambda$, the radius $k$ and the dimension $n$, for the complete $\lambda$-hypersurfaces  with controlled intrinsic
volume growth contained in the Euclidean closed ball $\overline{\mathcal{B}}^{n+1}_k(0)$.
In section 6, we generalize the well-known Myers' theorem on a complete and connected $\lambda$-hypersurface with $Ric_f \geq \frac{1-\lambda^2-3\alpha^2}{2} >0$.
In section 7, we obtain some properties on the topology at the infinity of a bounded $\lambda$-hypersurface with $Ric_f\geq 0$.
In section 8, we get some natural and general restrictions that force the $\lambda$-hypersurface to be compact.

\section{ Preliminaries}
Throughout this paper, the Einstein convention of summing over repeated indices from
1 to $n$ will be adopted.

Let $X:M^n \rightarrow R^{n+1}$ be an $n$-dimensional complete hypersurface of
Euclidean space $R^{n+1}$. We choose a local orthonormal frame field $\{e_\mathcal{\epsilon}\}^{n+1}_{\mathcal{\epsilon}=1}$ in $R^{n+1}$
with dual coframe field $\{\omega_\mathcal{\epsilon}\}^{n+1}_{\mathcal{\epsilon}=1}$ , such that, restricted to $M^n$, $e_1, \cdots , e_n$ are tangent
to $M^n$, and $e_{n+1}$ is the unit normal vector $N$. The coefficients of the second fundamental form $A$ are defined to be
\begin{equation}
h_{ij} = <\nabla_{e_i}e_j, N> .
\end{equation}
In particular, we have
\begin{equation}
\nabla_{e_i}N = -h_{ij}e_j.
\end{equation}
Since $<\nabla_NN, N>=0$, then the mean curvature
\begin{equation}
H = <\nabla_{e_i}N, e_i>=-h_{ii}.
\end{equation}
The Riemann curvature tensor and the Ricci tensor are
given by Gauss equation
\begin{equation}
R_{ijkl} = h_{ik}h_{jl} - h_{il}h_{jk},
\end{equation}
\begin{equation}
R_{ij} = -Hh_{ij} - h_{il}h_{lj}.
\end{equation}
Let $f = \frac{|X|^2}{4}$,
and denote by $d vol_f$ the corresponding weighted volume measure of $M$,
\begin{equation}
dvol_f = e^{-f} dvol.
\end{equation}
Thus, $M = (M, dvol_f)$ is a smooth metric measure space. There is a natural drifted Laplacian on $(M, dvol_f)$ defined by
\begin{equation}
\triangle_f = e^f div(e^{-f}\nabla) =\triangle -<\nabla ,\nabla f>.
\end{equation}
The $\infty$ - Bakry - Emery Ricci tensor $Ric_f$ of $(M, dvol_f)$ is defined by
\begin{equation}
Ric_f = Ric + Hess(f).
\end{equation}
Next we look at the $\infty$ - Bakry - Emery Ricci tensor $Ric_f$ of $\lambda$-hypersurface. For simplicity, we choose a frame such that $\nabla^T_{e_i}e_j=0$, then

\begin{eqnarray}
& &(Ric_f)_{ij}=R_{ij}+\nabla_{e_i}\nabla_{e_j} f\nonumber \\
& &\ \ \ \ \ \ \ \ \ \ \ =R_{ij}+ \frac{1}{2}(<\nabla_{e_i}X^T,e_j>+ <X^T,\nabla_{e_i}e_j>) \nonumber \\
& &\ \ \ \ \ \ \ \ \ \ \ =R_{ij}+ \frac{1}{2}(<\nabla_{e_i}X,e_j>- <\nabla_{e_i}(<X,N>N),e_j> \nonumber \\
& &\ \ \ \ \ \ \ \ \ \ \ \ \ \ + <X,\nabla_{e_i}e_j>-<X,N><N,\nabla_{e_i}e_j> )\nonumber \\
& &\ \ \ \ \ \ \ \ \ \ \ =R_{ij}+ \frac{1}{2}(\delta_{ij}-<X,N><\nabla_{e_i}N,e_j>   \nonumber\\
& &\ \ \ \ \ \ \ \ \ \ \ \ \ \ +<X,<\nabla_{e_i}e_j,N>N>-<X,N><\nabla_{e_i}e_j,N>)                \nonumber\\
& &\ \ \ \ \ \ \ \ \ \ \ =R_{ij}+ \frac{1}{2}\delta_{ij}-\frac{1}{2}<X,N><\nabla_{e_i}N,e_j>   \nonumber\\
& &\ \ \ \ \ \ \ \ \ \ \ =R_{ij}+ \frac{1}{2}\delta_{ij}+\frac{1}{2}<X,N>h_{ij}   \nonumber\\
& &\ \ \ \ \ \ \ \ \ \ \ =-Hh_{ij} - h_{il}h_{lj}+\frac{1}{2}\delta_{ij}+(H-\lambda)h_{ij}   \nonumber\\
& &\ \ \ \ \ \ \ \ \ \ \ =\frac{1}{2}\delta_{ij}-\lambda h_{ij}- h_{il}h_{lj}  \nonumber\\
& &\ \ \ \ \ \ \ \ \ \ \ \geq\frac{1}{2}\delta_{ij}-\frac{1}{2}(\lambda^2+h^2_{ij})- h_{il}h_{lj}.   \nonumber\\
\end{eqnarray}
Hence, we get the following lower bound for  the $\infty$ - Bakry - Emery Ricci tensor $Ric_f$ of $\lambda$-hypersurface,
\begin{equation}
Ric_f \geq \frac{1-\lambda^2}{2}-\frac{3}{2}|A|^2.
\end{equation}

\section{ Volume comparison theorem of $\lambda$-hypersurfaces}
The classical volume comparison theorem shows that the volume of any ball is
bounded above by the volume of the corresponding ball in the model space, validating the intuitive picture: the bigger the curvature, the smaller the volume. Moreover, this is much less intuitive, if the volume of a big ball has a lower bound, then all the smaller balls also have lower bounds. It enjoys many geometric and topological applications.

In this section, we will investigate the volume comparison theorem of the complete bounded $\lambda$-hypersurface $M^n$ with $|A|\leq\alpha$, that is, the $\infty$ - Bakry - Emery Ricci tensor
\begin{equation}
Ric_f \geq \frac{1-\lambda^2-3\alpha^2}{2},
\end{equation}
where $\alpha$ is an arbitrary nonnegative constant.
Firstly, we fix a point $p\in M^n$, and let $r(x) = d(p, x)$
be the intrinsic distance from $p$ to $x$. This defines a Lipschitz function on the $\lambda$-hypersurface, which is
smooth except the cut locus of $p$. In geodesic polar coordinates, the volume element
$d vol = dr \wedge A_{\vartheta}(r) d\vartheta$, where $d\vartheta$ is the volume form of the standard $S^{n-1}$.
Let $B(p,R)$ be the geodesic ball of $M^n$ with radius $R$ centered at $p$, the
volume of $B(p,R)$ is defined by
\begin{equation}
Vol(B(p,R))=\int_0^R dr\int_{S^{n-1}(p,r)}A_{\vartheta}(r) d\vartheta.
\end{equation}
where $S^{n-1}(p,r)=\{x\in M| d(p,x)=r\}$.
Let $H(r)$ denote the mean curvature of the geodesic sphere at $p$ with outer normal vector $N$, then we have
\begin{equation}
\triangle r=H(r)=\frac{\partial}{\partial r}log A_\vartheta(r).
\end{equation}
Let $\omega_\alpha(t)$ be the solution to
\begin{equation}
\omega_\alpha^{''} + \frac{1-\lambda^2-3\alpha^2}{2(n-1)}\omega_\alpha = 0
\end{equation}
such that $\omega_\alpha(0) = 0$ and $\omega_\alpha^{'}(0) = 1$, i.e. $\omega_\alpha$ are the coefficients of the Jacobi
fields of the simply connected model space $M^n_{\alpha,\lambda}$ with constant curvature $\frac{1-\lambda^2-3\alpha^2}{2(n-1)}$, and
\begin{equation}
\omega_\alpha(t) =
\left\{
\begin{array}{ll}
\frac{\sqrt{2(n-1)}}{\sqrt{1-\lambda^2-3\alpha^2}}sin(\sqrt{\frac{1-\lambda^2-3\alpha^2}{2(n-1)}} \ t),\ \lambda^2+3\alpha^2<1 \\
t, \ \lambda^2+3\alpha^2=1 \\
\frac{\sqrt{2(n-1)}}{\sqrt{|1-\lambda^2-3\alpha^2|}}sinh(\sqrt{\frac{|1-\lambda^2-3\alpha^2|}{2(n-1)}}\ t), \ \lambda^2+3\alpha^2>1\\
\end{array}
\right.
\end{equation}
 Let
$d vol_\alpha = dr \wedge A_{\vartheta_\alpha}(r) d\vartheta_\alpha$ be the volume element of model space $M^n_\alpha$, and denote by $H_{\alpha}$ the mean curvature of the geodesic sphere, then we have
\begin{equation}
H_\alpha(r)=\frac{A^{'}_{\vartheta_\alpha}(r)}{A_{\vartheta_\alpha}(r)}=(n-1)\frac{\omega_\alpha^{'}(r)}{\omega_\alpha(r) }.
\end{equation}
For real numbers $\alpha, \lambda, k$ and $n$, let
\begin{equation}
V_{\alpha,\lambda,k}(r)= vol(S^{n-1}(1))\int_0^r (A_{\vartheta_\alpha}(t))^{(1+\frac{k^2}{2(n-1)})}dt.
\end{equation}
Then we have the following volume comparison theorem for complete bounded $\lambda$-hypersurfaces.
\begin{thm}
Let $X : M \rightarrow \overline{\mathcal{B}}^{n+1}_{k} (0) \subset R^{n+1}$ be  an $n$-dimensional complete $\lambda$-hypersurface with $|A|\leq\alpha$, where $\alpha$ is an arbitrary nonnegative constant, and $\overline{\mathcal{B}}^{n+1}_k(0)$ denotes the Euclidean closed ball with center $0$ and radius $k$. Then for any point $p\in M^n$, and $ 0 < R_1\leq R_2$, we have
\begin{equation}
\frac{Vol(B(p,R_2))}{Vol(B(p,R_1))}\leq e^{\frac{k^2}{4}}\frac{V_{\alpha,\lambda,k}(R_2)}{V_{\alpha,\lambda,k}(R_1)},
\end{equation}
where we assume $R_2 \leq \frac{\sqrt{2(n-1)}\pi}{4\sqrt{1-\lambda^2-3\alpha^2}}$, if $\lambda^2+3\alpha^2<1$. \\
\end{thm}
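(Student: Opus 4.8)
The plan is to prove a weighted Bishop--Gromov inequality for the drifted measure $dvol_f = e^{-f}\,dvol$ and then convert it into the stated unweighted estimate using the uniform two-sided control of $f$ coming from the containment $M \subset \overline{\mathcal B}^{\,n+1}_k(0)$. Since $f = |X|^2/4$ and $|X|\le k$, we have $0 \le f \le k^2/4$, hence $e^{-k^2/4}\,dvol \le dvol_f \le dvol$; this is exactly what will produce the factor $e^{k^2/4}$ at the very end. First I would work in geodesic polar coordinates centered at $p$ and record that the logarithmic radial derivative of the weighted area element $e^{-f}A_\vartheta(r)$ is the drifted mean curvature of the geodesic sphere,
\[
\frac{\partial}{\partial r}\log\!\big(e^{-f}A_\vartheta(r)\big) = \triangle r - \partial_r f = \triangle_f r =: m_f(r).
\]
Combining the Bochner formula for $r$ with the Cauchy--Schwarz bound $|\mathrm{Hess}\,r|^2 \ge (\triangle r)^2/(n-1)$ gives the Riccati-type inequality
\[
m_f'(r) + \frac{(\triangle r)^2}{n-1} \le -Ric_f(\partial_r,\partial_r) \le -\frac{1-\lambda^2-3\alpha^2}{2},
\]
where the last step uses the pointwise lower bound $Ric_f \ge \tfrac{1-\lambda^2}{2}-\tfrac32|A|^2$ from Section 2 together with $|A|\le\alpha$; that is, $Ric_f \ge (n-1)K$ with $K = \frac{1-\lambda^2-3\alpha^2}{2(n-1)}$, the curvature of the model whose Jacobi coefficient is $\omega_\alpha$.

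The heart of the argument is a mean curvature comparison against the \emph{boosted} model: I would show
\[
m_f(r) \le \Big((n-1)+\tfrac{k^2}{2}\Big)\frac{\omega_\alpha'(r)}{\omega_\alpha(r)},
\]
which is precisely the logarithmic radial derivative of $\omega_\alpha(r)^{(n-1)+k^2/2}$, the integrand of $V_{\alpha,\lambda,k}$ since $A_{\vartheta_\alpha}=\omega_\alpha^{\,n-1}$. The mechanism is to substitute $\triangle r = m_f + \partial_r f$ back into the Riccati inequality and to absorb the drift term using $|\partial_r f| = \tfrac12|\langle X,\nabla r\rangle|\le k/2$; a Young-type splitting of $(m_f+\partial_r f)^2$ converts the gap between the denominator $n-1$ and the effective dimension $n-1+\tfrac{k^2}{2}$ into a controlled remainder, after which a first-contact (barrier) comparison with the model Riccati solution yields the displayed estimate. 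The restriction $R_2 \le \frac{\sqrt{2(n-1)}\pi}{4\sqrt{1-\lambda^2-3\alpha^2}}$ in the case $\lambda^2+3\alpha^2<1$ is exactly what keeps $\omega_\alpha$ and $\omega_\alpha'$ positive (indeed $\omega_\alpha' \ge \tfrac{\sqrt2}{2}$) on $[0,R_2]$, so that the model comparison function is positive and non-decreasing and the barrier argument is valid; in the cases $\lambda^2+3\alpha^2\ge 1$ no restriction is needed, as $\omega_\alpha$ is then increasing for all $r>0$.

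Finally, the comparison $m_f \le \big((n-1)+\tfrac{k^2}{2}\big)\omega_\alpha'/\omega_\alpha$ says that the quotient $e^{-f}A_\vartheta(r)\big/\omega_\alpha(r)^{(n-1)+k^2/2}$ is non-increasing in $r$ along each minimal geodesic (extending $A_\vartheta$ by zero past the cut locus, where the inequality persists in the barrier sense). Integrating over $\vartheta\in S^{n-1}$ and invoking the standard lemma that $\int_0^{R_2} g\big/\int_0^{R_1} g \le \int_0^{R_2}\tilde g\big/\int_0^{R_1}\tilde g$ whenever $g/\tilde g$ is non-increasing with $\tilde g>0$, taken with $\tilde g(r)=\omega_\alpha(r)^{(n-1)+k^2/2}$, gives the weighted inequality $\frac{Vol_f(B(p,R_2))}{Vol_f(B(p,R_1))} \le \frac{V_{\alpha,\lambda,k}(R_2)}{V_{\alpha,\lambda,k}(R_1)}$. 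The two-sided bound on $f$ then yields $Vol(B(p,R_2)) \le e^{k^2/4}\,Vol_f(B(p,R_2))$ and $Vol(B(p,R_1)) \ge Vol_f(B(p,R_1))$, which combine to give the claimed estimate.

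I expect the main obstacle to be the mean curvature comparison step. Unlike the classical setting, the drifted Laplacian mixes $m_f$ with the genuine mean curvature $\triangle r$ through $\partial_r f$, and turning the bound $|\partial_r f|\le k/2$ into exactly the effective-dimension gain $k^2/2$ in the exponent, rather than only an additive exponential correction as in the standard bounded-potential estimates, is the delicate point where the extrinsic radius $k$ must be used sharply; everything else (the polar-coordinate set-up, the cut-locus barrier argument, and Gromov's integration lemma) is routine once this estimate is in place.
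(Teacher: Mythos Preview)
Your overall architecture—prove the drifted mean-curvature estimate $m_f(r)\le\big((n-1)+\tfrac{k^2}{2}\big)\,\omega_\alpha'(r)/\omega_\alpha(r)$, deduce the weighted Bishop--Gromov inequality for $dvol_f$, and then convert to the unweighted statement via $0\le f\le k^2/4$—is exactly the paper's structure. The gap is in the \emph{mechanism} you propose for the mean-curvature step.

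A pointwise Riccati/first-contact argument fed only by the gradient bound $|\partial_r f|\le k/2$ cannot produce the effective dimension $(n-1)+\tfrac{k^2}{2}$. Carrying out your Young-type splitting, for any $s>0$ one has $(m_f+\partial_r f)^2\ge \frac{m_f^2}{1+s}-\frac{(\partial_r f)^2}{s}$; choosing $1+s=\frac{N-1}{n-1}$ with $N-1=(n-1)+\tfrac{k^2}{2}$ and using $|\partial_r f|\le k/2$ gives
\[
m_f'+\frac{m_f^2}{N-1}\;\le\;-(n-1)K+\tfrac12,
\]
whereas the model $H_{model}=(N-1)\omega_\alpha'/\omega_\alpha$ solves $H_{model}'+\frac{H_{model}^2}{N-1}=-(N-1)K$. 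The barrier comparison would require $-(n-1)K+\tfrac12\le-(N-1)K$, i.e.\ $\tfrac12\le -\tfrac{k^2}{2}K$, which fails whenever $K\ge 0$. Concretely, in the flat case $K=0$, $\omega_\alpha(t)=t$, if $\partial_r f\equiv -k/2$ on an interval then the Riccati inequality yields only $m_f\le (n-1)/r+k/2$, which exceeds $\big((n-1)+\tfrac{k^2}{2}\big)/r$ for all $r>k$. What rules out sustaining $\partial_r f=-k/2$ is precisely the lower bound $f\ge 0$, so the two-sided control on $f$ itself must enter already in the mean-curvature comparison, not only in the final conversion.

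The paper obtains the estimate by the Wei--Wylie index-form technique rather than an ODE comparison: apply the second variation formula with the test fields $X_i(t)=\frac{\omega_\alpha(t)}{\omega_\alpha(r)}E_i(t)$ to get
\[
\triangle r\;\le\;H_\alpha(r)+\frac{1}{\omega_\alpha^2(r)}\int_0^r\omega_\alpha^2(t)\,(f\circ\gamma)''(t)\,dt,
\]
then integrate by parts \emph{twice} so that the derivatives fall on $\omega_\alpha^2$ rather than on $f$. Using $0\le f\le k^2/4$ together with $(\omega_\alpha^2)'\ge 0$ and $(\omega_\alpha^2)''\ge 0$ on $[0,r]$ yields exactly $\triangle r-\partial_r f\le\big(1+\tfrac{k^2}{2(n-1)}\big)H_\alpha(r)$, which is your claimed bound on $m_f$. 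This also corrects your reading of the restriction $R_2\le\frac{\sqrt{2(n-1)}\,\pi}{4\sqrt{1-\lambda^2-3\alpha^2}}$: it is not there merely to keep $\omega_\alpha,\omega_\alpha'>0$ (that would only need half the model radius), but to guarantee $(\omega_\alpha^2)''(t)=2\cos\!\big(2\sqrt{K}\,t\big)\ge 0$, the sign condition used when bounding $\int_0^r(\omega_\alpha^2)''\,(f\circ\gamma)\,dt\le \tfrac{k^2}{4}\int_0^r(\omega_\alpha^2)''\,dt$. Once you replace the Riccati step by this integrated argument, the rest of your outline (monotonicity of $e^{-f}A_\vartheta/\omega_\alpha^{(n-1)+k^2/2}$, Gromov's lemma, and the $e^{k^2/4}$ conversion) goes through as written.
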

\begin{proof}
(of Theorem 3.1)
Fix $p \in M$ as a base point, and let $\gamma : [0, r] \rightarrow M$ be a minimizing unit speed
geodesic from $p$. Let $\{E_i(t)\}_{i=1}^{n-1}$ be parallel orthonormal vector fields along
$\gamma(t) $ which are orthogonal to $\dot{\gamma}$. Constructing vector fields $\{X_i(t)= \frac{\omega_\alpha(t)}{|\omega_\alpha(r)|} E_i(t)\}_{i=1}^{n-1}$
along $\gamma $, then by the second variation formula, we have
\begin{eqnarray}
& &\triangle r\leq \int^r_0 \sum_{i=1}^{n-1}(|\nabla_{\dot{\gamma}}X_i|^2-<X_i,R_{X_i,\dot{\gamma}}\dot{\gamma}>)dt            \nonumber \\
& &\ \ \ \ \ =\frac{1}{\omega_\alpha^2(r)}\int^r_0 ((n-1)(\omega^{'}_\alpha(t))^2-\omega_\alpha^2(t) Ric(\dot{\gamma},\dot{\gamma}))dt.            \nonumber \\
\end{eqnarray}
By (2.8), (2.10) and the assumption $|A|\leq\alpha$, we have
\begin{eqnarray}
& &\triangle r\leq \frac{1}{\omega_\alpha^2(r)}\int^r_0 ((n-1)(\omega^{'}_\alpha(t))^2-\omega_\alpha^2(t)Ric_f(\dot{\gamma},\dot{\gamma}) )dt       \nonumber \\
& &\ \ \ \ \ \ \ +\frac{1}{\omega_\alpha^2(r)}\int^r_0\omega_\alpha^2(t)(Hess(f)(\dot{\gamma},\dot{\gamma}))dt           \nonumber \\
& &\ \ \ \ \ \leq\frac{1}{\omega_\alpha^2(r)}\int^r_0 ((n-1)((\omega^{'}_\alpha(t))^2-\frac{1-\lambda^2-3\alpha^2}{2(n-1)}\omega_\alpha^2(t)))dt           \nonumber \\
& &\ \ \ \ \ \ \ +\frac{1}{\omega_\alpha^2(r)}\int^r_0(\omega_\alpha^2(t) \frac{d^2}{d t^2}(f\circ \gamma))dt.  \nonumber \\
\end{eqnarray}
On the other hand, by (3.5) and (3.6), we can get
\begin{equation}
(\omega_\alpha^2(t)H_{\alpha}(t))^{'} = (n-1)((\omega^{'}_\alpha(t))^2-\frac{1-\lambda^2-3\alpha^2}{2(n-1)}\omega_\alpha^2(t)).
\end{equation}
Thus, the inequality (3.10) becomes
\begin{eqnarray}
& &\triangle r\leq \frac{1}{\omega_\alpha^2(r)}\int^r_0 (\omega_\alpha^2(t)H_{\alpha}(t))^{'}dt  +\frac{1}{\omega_\alpha^2(r)}\int^r_0 (\omega_\alpha^2(t)\frac{d^2}{d t^2}(f\circ \gamma))dt        \nonumber \\
& &\ \ \ \ \ =H_{\alpha}(r)+ \frac{1}{\omega_\alpha^2(r)}\int^r_0 (\omega_\alpha^2(t)\frac{d^2}{d t^2}(f\circ \gamma))dt,         \nonumber \\
\end{eqnarray}
where we have used $\omega_\alpha(0) = 0$. By integration by parts on the last term, the expression (3.12) can be written as
\begin{eqnarray}
& &\triangle r\leq H_{\alpha}(r)- \frac{1}{\omega_\alpha^2(r)}\int^r_0 (\omega_\alpha^2(t))^{'}\frac{d}{d t}(f\circ \gamma) dt+ \frac{d}{d t}(f\circ \gamma)|_{t=r}              \nonumber \\
& &\ \ \ \ \ =H_{\alpha}(r)+ \frac{1}{\omega_\alpha^2(r)}(\int^r_0 (\omega_\alpha^2(t))^{''}(f\circ \gamma) dt- (\omega_\alpha^2(r))^{'}(f\circ \gamma)(r))\nonumber \\
& &\ \ \ \ \ \ \ \ + \frac{d}{d t}(f\circ \gamma)|_{t=r}.          \nonumber \\
\end{eqnarray}
Since the $\lambda$-hypersurface is contained in the ball $\mathcal{B}^{n+1}_k(0)$, then $f = \frac{|X|^2}{4} \leq \frac{k^2}{4}$. On the other hand, we assume $r\in(0,  \frac{\sqrt{2(n-1)}\pi}{4\sqrt{1-\lambda^2-3\alpha^2}}]$, if $\alpha<\sqrt{\frac{1-\lambda^2}{3}}$. This implies that $(\omega_\alpha^2)^{'}\geq 0$ and $(\omega_\alpha^2)^{''}\geq 0$, for all $\alpha\geq 0$. Then
\begin{eqnarray}
& &\triangle r\leq H_{\alpha}(r)+ \frac{1}{\omega_\alpha^2(r)}(\frac{k^2}{4}\int^r_0 (\omega_\alpha^2(t))^{''}dt- (\omega_\alpha^2(r))^{'}(f\circ \gamma)(r))\nonumber \\
& &\ \ \ \ \ \ \ \ + \frac{d}{d t}(f\circ \gamma)|_{t=r}          \nonumber \\
& &\ \ \ \ \ = H_{\alpha}(r)+ \frac{(\omega_\alpha^2(r))^{'}}{\omega_\alpha^2(r)}(\frac{k^2}{4} - (f\circ \gamma)(r))+ \frac{d}{d t}(f\circ \gamma)|_{t=r}          \nonumber \\
& &\ \ \ \ \ \leq H_{\alpha}(r)+ \frac{k^2}{4}\frac{(\omega_\alpha^2(r))^{'}}{\omega_\alpha^2(r)}+\frac{d}{d t}(f\circ \gamma)|_{t=r}       \nonumber \\
& &\ \ \ \ \ =  H_{\alpha}(r)+ \frac{k^2}{2(n-1)}H_{\alpha}(r)+\frac{d}{d t}(f\circ \gamma)|_{t=r}            \nonumber \\
& &\ \ \ \ \ =  (1+ \frac{k^2}{2(n-1)})H_{\alpha}(r)+\frac{d}{d t}(f\circ \gamma)|_{t=r},             \nonumber \\
\end{eqnarray}
here in the fifth line we have used (3.6). By (3.3) and (3.6), the expression (3.14) can be written as
\begin{equation}
\frac{\partial}{\partial r}log A_\vartheta(r) \leq  (1+ \frac{k^2}{2(n-1)})\frac{d}{d r}log A_{\vartheta_\alpha}(r)+\frac{d}{d t}(f\circ \gamma)|_{t=r}.
\end{equation}
Integrating it from $s_1$ to $s_2$, together with $f = \frac{|X|^2}{4} \leq \frac{k^2}{4}$, we have
\begin{equation}
log \frac{A_\vartheta(s_2)}{A_\vartheta(s_1)} \leq  (1+ \frac{k^2}{2(n-1)})log \frac{A_{\vartheta_\alpha}(s_2)}{A_{\vartheta_\alpha}(s_1)}+\frac{k^2}{4}.
\end{equation}
Then
\begin{equation}
\frac{A_\vartheta(s_2)}{A_\vartheta(s_1)} \leq  e^{\frac{k^2}{4}}(\frac{A_{\vartheta_\alpha}(s_2)}{A_{\vartheta_\alpha}(s_1)})^{(1+ \frac{k^2}{2(n-1)})},
\end{equation}
that is,
\begin{equation}
A_\vartheta(s_2) \cdot (A_{\vartheta_\alpha}(s_1))^{(1+ \frac{k^2}{2(n-1)})}  \leq e^{\frac{k^2}{4}} \cdot A_\vartheta(s_1) \cdot (A_{\vartheta_\alpha}(s_2))^{(1+ \frac{k^2}{2(n-1)})}.
\end{equation}
Integrating it from $0$ to $R_1$ with respect to $s_1$, and from $0$ to $R_2$ with respect to $s_2$, we have
\begin{eqnarray}
& &\ \ \ \ \int_0^{R_2}A_\vartheta(s_2)d s_2 \cdot \int_0^{R_1}(A_{\vartheta_\alpha}(s_1))^{(1+ \frac{k^2}{2(n-1)})}d s_1   \nonumber \\
& &\leq e^{\frac{k^2}{4}} \cdot \int_0^{R_1}A_\vartheta(s_1)d s_1  \cdot \int_0^{R_2}(A_{\vartheta_\alpha}(s_2))^{(1+ \frac{k^2}{2(n-1)})}d s_2 .   \nonumber \\
\end{eqnarray}
Integration along the sphere direction gives
\begin{equation}
Vol(B(p,R_2))\cdot V_{\alpha,\lambda,k}(R_1)\leq e^{\frac{k^2}{4}}\cdot Vol(B(p,R_1)) \cdot V_{\alpha,\lambda,k}(R_2).
\end{equation}
Then the result follows.
\end{proof}

Note that the $\lambda$-hypersurface is a self-shrinker of the mean curvature flow when $\lambda= 0$, then we have the following corollary.
\begin{cor}
Let $X : M \rightarrow \overline{\mathcal{B}}^{n+1}_{k} (0) \subset R^{n+1}$ be  an $n$-dimensional complete self-shrinker with $|A|\leq\alpha$. Then for any $p\in M^n$, $0 < R_1\leq R_2$, we have
\begin{equation}
\frac{Vol(B(p,R_2))}{Vol(B(p,R_1))}\leq e^{\frac{k^2}{4}}\frac{V_{\alpha,0,k}(R_2)}{V_{\alpha,0,k}(R_1)},
\end{equation}
where $R_2 \leq \frac{\sqrt{2(n-1)}\pi}{4\sqrt{1-\lambda^2-3\alpha^2}}$ if $\lambda^2+3\alpha^2<1$.
\end{cor}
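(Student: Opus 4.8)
The plan is to obtain this statement as the special case $\lambda=0$ of Theorem 3.1, so that essentially no new work is needed. First I would note that the two curvature conditions (1.1) and (1.2) differ only in the constant on the right-hand side: a self-shrinker satisfies $H-\frac{<X,N>}{2}=0$, whereas a $\lambda$-hypersurface satisfies $H-\frac{<N,X>}{2}=\lambda$. Thus a complete self-shrinker is precisely a complete $\lambda$-hypersurface with $\lambda=0$, and the hypotheses of Theorem 3.1 hold for $X:M\rightarrow\overline{\mathcal{B}}^{n+1}_{k}(0)$ with $|A|\leq\alpha$ once we set $\lambda=0$.

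Next I would simply read off the conclusion (3.8) of Theorem 3.1 with $\lambda=0$. The model warping function $\omega_\alpha$ of (3.5) enters that conclusion only through the model area element $A_{\vartheta_\alpha}$, and hence only through the quantity $V_{\alpha,\lambda,k}$ defined in (3.7); putting $\lambda=0$ therefore replaces $V_{\alpha,\lambda,k}$ by $V_{\alpha,0,k}$ everywhere, which is exactly the asserted inequality. The admissible-radius hypothesis of Theorem 3.1, namely $R_2\leq\frac{\sqrt{2(n-1)}\pi}{4\sqrt{1-\lambda^2-3\alpha^2}}$ when $\lambda^2+3\alpha^2<1$, carries over verbatim; since $\lambda=0$ it may equivalently be stated as $R_2\leq\frac{\sqrt{2(n-1)}\pi}{4\sqrt{1-3\alpha^2}}$ whenever $3\alpha^2<1$.

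Because the argument is a pure specialization, there is no genuine obstacle to overcome: the Jacobi-field comparison, the integration by parts handling the drift term $f\circ\gamma$, and the use of the containment bound $f=\frac{|X|^2}{4}\leq\frac{k^2}{4}$ were all carried out in the proof of Theorem 3.1 and require no change when $\lambda=0$. The single point I would verify explicitly is that setting $\lambda=0$ does not weaken the curvature input: by (3.1) the lower bound becomes $Ric_f\geq\frac{1-3\alpha^2}{2}$, which is simply the $\lambda=0$ instance of the bound used throughout Section 3, so every inequality in the proof of Theorem 3.1 remains valid and the claimed comparison follows.
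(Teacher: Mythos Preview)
Your proposal is correct and matches the paper's own approach: the corollary is obtained simply by observing that a self-shrinker is a $\lambda$-hypersurface with $\lambda=0$ and then specializing Theorem~3.1 accordingly. No additional argument is needed.
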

Since $(A_{\vartheta_\alpha}(t))^{(1+\frac{k^2}{2(n-1)})}=(\omega_\alpha(t))^{(n-1)(1+\frac{k^2}{2(n-1)})}=(\omega_\alpha(t))^{(n+\frac{k^2}{2}-1)}$,
then $dr \wedge A_{\vartheta_\alpha}(r)^{(1+\frac{k^2}{2(n-1)})} d\vartheta_\alpha$ can be considered as the volume element of simply connected model space $M^{n+\frac{k^2}{2}}_{\alpha,\lambda}$ of dimension $n+\frac{k^2}{2}$ with constant curvature $\frac{1-\lambda^2-3\alpha^2}{2(n-1)}$. Now by $(3.18)$, we can obtain the following volume comparison for balls.
\begin{thm}
Let $X : M \rightarrow \overline{\mathcal{B}}^{n+1}_{k} (0) \subset R^{n+1}$ be  an $n$-dimensional complete $\lambda$-hypersurface with $|A|\leq\alpha$. Then for any $p\in M^n$, $ 0 < R_1\leq R_2$, we have
\begin{equation}
\frac{Vol(B(p,R_2))}{Vol(B(p,R_1))}\leq e^{\frac{k^2}{4}}\frac{Vol_{\alpha,\lambda}^{n+\frac{k^2}{2}}(R_2)}{Vol_{\alpha,\lambda}^{n+\frac{k^2}{2}}(R_1)},
\end{equation}
where $Vol_{\alpha,\lambda}^{n+\frac{k^2}{2}}(r)$ is the volume of the ball with radius $r$ in model space $M^{n+\frac{k^2}{2}}_{\alpha,\lambda}$, and $R_2 \leq \frac{\sqrt{2(n-1)}\pi}{4\sqrt{1-\lambda^2-3\alpha^2}}$ if $\lambda^2+3\alpha^2<1$.
\end{thm}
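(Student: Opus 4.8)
The plan is to obtain this statement as a direct corollary of Theorem 3.1, by reinterpreting the auxiliary function $V_{\alpha,\lambda,k}$ appearing in (3.7) as the volume function of a generalized model space. No new analysis is needed beyond the conclusion (3.8) of Theorem 3.1; the entire content is an identification of the radial volume density, together with the observation that an $r$-independent normalizing constant cancels in the ratio.

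First I would record the shape of the model density. Integrating (3.6) with the normalization $\omega_\alpha(0)=0$, $\omega_\alpha'(0)=1$ gives $A_{\vartheta_\alpha}(t)=\omega_\alpha(t)^{n-1}$, whence
\[
(A_{\vartheta_\alpha}(t))^{1+\frac{k^2}{2(n-1)}}=\omega_\alpha(t)^{(n-1)\left(1+\frac{k^2}{2(n-1)}\right)}=\omega_\alpha(t)^{\,n+\frac{k^2}{2}-1},
\]
which is exactly the identity already noted before the statement. Now $\omega_\alpha$ solves the Jacobi equation (3.4), i.e. it is the Jacobi-field coefficient of a simply connected space form of constant sectional curvature $\frac{1-\lambda^2-3\alpha^2}{2(n-1)}$, and a space form of (formal) dimension $m$ and this curvature has radial volume density $\omega_\alpha(t)^{m-1}\,dt\wedge d\vartheta$. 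Matching exponents forces $m-1=n+\frac{k^2}{2}-1$, i.e. $m=n+\frac{k^2}{2}$, so the density above is precisely the volume element of the model space $M^{n+\frac{k^2}{2}}_{\alpha,\lambda}$, whose ball volume I take to be
\[
Vol_{\alpha,\lambda}^{n+\frac{k^2}{2}}(r)=vol\!\left(S^{n+\frac{k^2}{2}-1}(1)\right)\int_0^r\omega_\alpha(t)^{\,n+\frac{k^2}{2}-1}\,dt .
\]

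Comparing this with the definition (3.7), the two functions satisfy $V_{\alpha,\lambda,k}(r)=c\,Vol_{\alpha,\lambda}^{n+\frac{k^2}{2}}(r)$, where $c=vol(S^{n-1}(1))/vol(S^{n+\frac{k^2}{2}-1}(1))$ is independent of $r$. Substituting into the conclusion (3.8) of Theorem 3.1, the constant $c$ occurs in both the numerator and denominator of $V_{\alpha,\lambda,k}(R_2)/V_{\alpha,\lambda,k}(R_1)$ and cancels, turning (3.8) into exactly the asserted inequality. The range restriction $R_2\le\frac{\sqrt{2(n-1)}\pi}{4\sqrt{1-\lambda^2-3\alpha^2}}$ for $\lambda^2+3\alpha^2<1$ is simply inherited from Theorem 3.1, since it is the interval on which the monotonicity of $\omega_\alpha^2$ used there is valid.

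The step requiring the most care is the interpretation of $M^{n+\frac{k^2}{2}}_{\alpha,\lambda}$, whose dimension $n+\frac{k^2}{2}$ is in general not an integer, so it is not literally a Riemannian manifold. The point is that nothing in the argument needs it to be one: $Vol_{\alpha,\lambda}^{n+\frac{k^2}{2}}(r)$ is defined solely through the radial integral of $\omega_\alpha(t)^{\,n+\frac{k^2}{2}-1}$, and the only feature used is that this density has the same $t$-dependence as $(A_{\vartheta_\alpha}(t))^{1+\frac{k^2}{2(n-1)}}$. Because the normalizing ``unit-sphere volume'' in the fractional dimension enters only as the $r$-independent constant $c$, which cancels in the ratio, the statement is insensitive to how that constant is fixed; one may as well set $Vol_{\alpha,\lambda}^{n+\frac{k^2}{2}}(r):=\int_0^r\omega_\alpha^{\,n+\frac{k^2}{2}-1}\,dt$ up to any positive constant. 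With this convention the theorem is an immediate rewriting of (3.8).
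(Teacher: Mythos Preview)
Your proposal is correct and follows essentially the same route as the paper: the paper also observes that $(A_{\vartheta_\alpha}(t))^{1+\frac{k^2}{2(n-1)}}=\omega_\alpha(t)^{\,n+\frac{k^2}{2}-1}$, interprets this as the radial density of the model $M^{n+\frac{k^2}{2}}_{\alpha,\lambda}$, and reads off Theorem~3.3 from the volume comparison of Theorem~3.1. The only cosmetic difference is that the paper points to the pointwise inequality (3.18) rather than the integrated conclusion (3.8), but your observation that the $r$-independent sphere-volume constant cancels in the ratio makes the derivation from (3.8) equally immediate.
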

Actually,  it is not easy to figure out the relevant conclusions by Theorem 3.1 and Theorem 3.3. In particular, for the complete $\lambda$-hypersurfaces with $|A|\leq \sqrt{\frac{1-\lambda^2}{3}}$(i.e. $Ric_f \geq 0$ and $|\lambda|\leq 1$ ), we obtain the following interesting result.
\begin{thm}
Let $X : M \rightarrow \overline{\mathcal{B}}^{n+1}_{k} (0) \subset R^{n+1}$ be  an $n$-dimensional complete $\lambda$-hypersurface with $|A|\leq\sqrt{\frac{1-\lambda^2}{3}}$. Then for any $p\in M^n$, $ 0 < R_1\leq R_2$, we have
\begin{equation}
\frac{Vol(B(p,R_2))}{Vol(B(p,R_1))}\leq e^{\frac{3k^2}{4}}\frac{V(R_2)}{V(R_1)},
\end{equation}
where $V(r)$ is the volume of the ball with radius $r$ in Euclidean space $R^{n}$. Moreover, we can get
\begin{equation}
Vol(B(p,R_2))\leq e^{\frac{3k^2}{4}} V(R_2).
\end{equation}
\end{thm}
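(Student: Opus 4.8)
The plan is to run the argument of Theorem 3.1 at the borderline value $\alpha=\sqrt{(1-\lambda^2)/3}$, where $1-\lambda^2-3\alpha^2=0$. At this value the model space degenerates to Euclidean space: by (3.5) one has $\omega_\alpha(t)=t$, hence $A_{\vartheta_\alpha}(r)=r^{n-1}$ and, by (3.6), $H_\alpha(r)=(n-1)/r$, while (2.12) gives $Ric_f\geq 0$. The essential point is that in this critical case one must \emph{not} insert the crude bound $f\leq k^2/4$ into the integral $\int_0^r(\omega_\alpha^2)''(f\circ\gamma)\,dt$ the way the proof of Theorem 3.1 does; that step is precisely what replaces the exponent $n$ by the effective dimension $n+\frac{k^2}{2}$ in Theorems 3.1 and 3.3. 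Instead I would carry this term along and exploit an exact cancellation.

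Concretely, set $g(t)=(f\circ\gamma)(t)$ and $G(r)=\int_0^r g\,dt$, and recall from $f=\frac{|X|^2}{4}$ together with $M\subset\overline{\mathcal{B}}^{n+1}_k(0)$ that $0\leq g\leq \frac{k^2}{4}$. Substituting $\omega_\alpha(t)=t$, so that $(\omega_\alpha^2)''=2$, $(\omega_\alpha^2(r))'=2r$ and $\omega_\alpha^2(r)=r^2$, directly into inequality (3.13) yields
\[ \triangle r \leq \frac{n-1}{r} + \frac{2G(r)}{r^2} - \frac{2g(r)}{r} + g'(r). \]
The key observation is that the three correction terms form a total derivative,
\[ \frac{2G(r)}{r^2} - \frac{2g(r)}{r} + g'(r) = \frac{d}{dr}\left( g(r) - \frac{2G(r)}{r} \right), \]
so that, by (3.3), $\frac{\partial}{\partial r}\log A_\vartheta(r) \leq \frac{n-1}{r} + \frac{d}{dr}\left(g - \frac{2G}{r}\right)$.

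Integrating this from $s_1$ to $s_2$ and estimating the boundary contributions with $0\leq g\leq\frac{k^2}{4}$ (namely $g(s_2)\leq\frac{k^2}{4}$, $-2G(s_2)/s_2\leq 0$, $-g(s_1)\leq 0$ and $2G(s_1)/s_1\leq\frac{k^2}{2}$) gives $\log\frac{A_\vartheta(s_2)}{A_\vartheta(s_1)}\leq (n-1)\log\frac{s_2}{s_1}+\frac{3k^2}{4}$, that is,
\[ A_\vartheta(s_2)\, s_1^{n-1} \leq e^{\frac{3k^2}{4}}\, A_\vartheta(s_1)\, s_2^{n-1}. \]
The point is that, because the correction is a total derivative rather than an integrand of fixed sign, \emph{no} logarithmic term survives in the exponent, and the comparison dimension stays equal to $n$; this is the whole improvement over Theorem 3.3.

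From here the Bishop--Gromov integration already used in Theorem 3.1 finishes the proof: integrating the last inequality in $s_1$ over $[0,R_1]$, in $s_2$ over $[0,R_2]$, and then over $\vartheta\in S^{n-1}$, and recalling $V(r)=vol(S^{n-1}(1))\int_0^r s^{n-1}\,ds$, produces (3.24). Finally, (3.25) follows by rewriting (3.24) as $\frac{Vol(B(p,R_2))}{V(R_2)}\leq e^{\frac{3k^2}{4}}\frac{Vol(B(p,R_1))}{V(R_1)}$ and letting $R_1\to 0$, since small geodesic balls are asymptotically Euclidean, so $Vol(B(p,R_1))/V(R_1)\to 1$. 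I expect the only genuinely delicate point to be the standard bookkeeping at the cut locus of $p$ (the same Lipschitz/barrier justification already implicit in Theorem 3.1); once the total-derivative cancellation is spotted the remaining computation is routine.
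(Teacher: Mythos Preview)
Your proposal is correct and follows essentially the same route as the paper. The paper also specializes (3.13) to $\omega_\alpha(t)=t$, obtains $\triangle r \leq \frac{n-1}{r}+\frac{2}{r^2}\int_0^r(f\circ\gamma)\,dt-\frac{2}{r}(f\circ\gamma)(r)+\frac{d}{dt}(f\circ\gamma)|_{t=r}$, and after integrating from $s_1$ to $s_2$ combines the middle two integrals by an integration by parts to land on exactly your boundary expression $-\frac{2G(r)}{r}\big|_{s_1}^{s_2}+g\big|_{s_1}^{s_2}$, which is then estimated by $\frac{3k^2}{4}$ via the same four inequalities you list; your observation that the correction terms form the total derivative $\frac{d}{dr}\big(g-\frac{2G}{r}\big)$ is just a slightly slicker packaging of that same integration by parts.
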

\begin{proof}
(of Theorem 3.4) Since $|A| \leq\sqrt{\frac{1-\lambda^2}{3}}$ implies $Ric_f \geq 0$, then the expression (3.13) can be written as
\begin{eqnarray}
& &\triangle r\leq \frac{n-1}{r}+\frac{1}{r^2}(\int^r_0 (t^2)^{''}(f\circ \gamma)dt-((t^2)^{'} f\circ \gamma)|_{t=r})+ \frac{d}{d t}(f\circ \gamma)dt|_{t=r}          \nonumber \\
& &\ \ \ \ \ =\frac{n-1}{r}+\frac{2}{r^2}\int^r_0(f\circ \gamma)dt-\frac{2}{r} f\circ \gamma(r)+ \frac{d}{d t}(f\circ \gamma)dt|_{t=r}.         \nonumber \\
\end{eqnarray}
That is,
\begin{equation}
\frac{\partial}{\partial r}log A_\vartheta(r) \leq  \frac{d}{d r}log   r^{n-1}+\frac{2}{r^2}\int^r_0(f\circ \gamma)dt-\frac{2}{r} f\circ \gamma(r)+ \frac{d}{d t}(f\circ \gamma))dt|_{t=r}.
\end{equation}
Integrating from $s_1$ to $s_2$, together with $f = \frac{|X|^2}{4} \leq \frac{k^2}{4}$, we have
\begin{eqnarray}
& &log \frac{A_\vartheta(s_2)}{A_\vartheta(s_1)} \leq  log \frac{s_2^{n-1}}{s_1^{n-1}}+\int^{s_2}_{s_1}(\frac{2}{r^2}\int^r_0(f\circ \gamma)dt)dr-\int^{s_2}_{s_1}(\frac{2}{r} f\circ \gamma(r))dr \nonumber \\
& &\ \ \ \ \ \ \ \ \  \ \ \ \ \ \ \ \ +f\circ \gamma|_{s_1}^{s_2} \nonumber \\
& &\ \ \ \ \ \ \ \ \  \ \ \ \ = log \frac{s_2^{n-1}}{s_1^{n-1}}-\frac{2}{r}\int^r_0(f\circ \gamma)dt|_{s_1}^{s_2}+f\circ \gamma|_{s_1}^{s_2} \nonumber \\
& &\ \ \ \ \ \ \ \ \  \ \ \ \ \leq log \frac{s_2^{n-1}}{s_1^{n-1}}+\frac{k^2}{2}+\frac{k^2}{4} \nonumber \\
& &\ \ \ \ \ \ \ \ \  \ \ \ \ = log \frac{s_2^{n-1}}{s_1^{n-1}}+\frac{3k^2}{4}. \nonumber \\
\end{eqnarray}
Now the result is obvious.
\end{proof}

\section{Some applications of volume comparison theorem}
The classical volume comparison theorem is a powerful tool in studying the manifolds with lower Ricci
curvature bound(See [36]).  In this section, we will give some applications of
the volume comparison of $\lambda$-hypersurfaces with lower $\infty$ - Bakry - Emery Ricci
curvature bound.

Firstly, we obtain the lower bound and upper bound on
volume growth for $\lambda$-hypersurface with $Ric_f \geq 0$ (i.e. $|A|\leq \sqrt{\frac{1-\lambda^2}{3}}$).
\begin{thm}
Let $X : M \rightarrow \overline{\mathcal{B}}^{n+1}_{k} (0) \subset R^{n+1}$ be  an $n$-dimensional complete $\lambda$-hypersurface with $|A|\leq \sqrt{\frac{1-\lambda^2}{3}}$. Then, for any $p \in M$,
\begin{equation}
C_1 R^{n} \geq Vol(B(p,R))\geq C_2 R.
\end{equation}
\end{thm}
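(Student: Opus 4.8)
The plan is to establish the two inequalities separately. The upper bound $Vol(B(p,R))\leq C_1R^n$ is immediate: the hypothesis $|A|\leq\sqrt{\frac{1-\lambda^2}{3}}$ is exactly the one of Theorem 3.4, so its estimate $Vol(B(p,R))\leq e^{\frac{3k^2}{4}}V(R)$ applies, and since $V(R)=\omega_nR^n$ with $\omega_n$ the volume of the unit $n$-ball, this gives the left-hand inequality with $C_1=e^{\frac{3k^2}{4}}\omega_n$. All the substance is in the linear lower bound $Vol(B(p,R))\geq C_2R$, which I would prove by the Calabi--Yau linear volume growth scheme adapted to the present weighted setting.

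First I would reduce to the noncompact case (if $M$ is compact the assertion is understood for $R$ within the diameter and is trivial once a fixed small ball has positive volume), so that completeness furnishes a ray $\gamma:[0,\infty)\to M$ with $\gamma(0)=p$. Fixing $\rho>0$ and a large $t$, and setting $q=\gamma(t)$ so that $d(p,q)=t$, one has the elementary inclusions
\[
B(p,\rho)\subseteq B(q,t+\rho)\setminus B(q,t-\rho),\qquad B(q,t-\rho)\subseteq B(p,2t-\rho).
\]
The idea is to bound the weighted volume of the shell $B(q,t+\rho)\setminus B(q,t-\rho)$ against that of the inner ball $B(q,t-\rho)$. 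Writing the unweighted area element of a geodesic from $q$ as $A_\vartheta(s)=s^{n-1}e^{G(s)}e^{-\frac{2}{s}\int_0^s\phi+\phi(s)}$, where $\phi=f\circ\gamma\in[0,\frac{k^2}{4}]$ and $G(s)=\log A_\vartheta(s)-(n-1)\log s+\frac{2}{s}\int_0^s\phi-\phi(s)$, the pointwise inequality obtained in the proof of Theorem 3.4 (the displayed bound on $\frac{\partial}{\partial r}\log A_\vartheta$) says precisely that $G$ is \emph{non-increasing} in $s$. Using $e^{G(s)}\leq e^{G(t-\rho)}$ on the shell and $e^{G(s)}\geq e^{G(t-\rho)}$ on the inner ball, together with $e^{-\frac{2}{s}\int_0^s\phi+\phi(s)}\in[e^{-k^2/2},e^{k^2/4}]$, I can estimate the shell and inner integrals separately to obtain, in each direction and hence after integration,
\[
\frac{Vol\big(B(q,t+\rho)\setminus B(q,t-\rho)\big)}{Vol\big(B(q,t-\rho)\big)}\ \leq\ e^{\frac{3k^2}{4}}\,\frac{(t+\rho)^n-(t-\rho)^n}{(t-\rho)^n}.
\]
Since the right-hand side decays like $2n\rho\,e^{\frac{3k^2}{4}}/t$, combining with the two inclusions gives $Vol(B(p,\rho))\leq\big(2n\rho\,e^{\frac{3k^2}{4}}/t\big)Vol(B(p,2t-\rho))$, i.e. $Vol(B(p,2t-\rho))\gtrsim t$, which reads off as $Vol(B(p,R))\geq C_2R$.

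The main obstacle is precisely the weight constant $e^{\frac{3k^2}{4}}>1$. If one tried the classical route of writing the shell volume as (outer ball) $-$ (inner ball) and inserting the relative comparison of Theorem 3.4, the shell-to-inner ratio would tend to $e^{\frac{3k^2}{4}}-1>0$, and the argument would collapse to a mere positive constant lower bound rather than linear growth. The point that rescues the proof, and the delicate step I expect to require care, is to use the \emph{sharp} per-direction monotonicity of $G$ to estimate the shell and inner-ball integrals \emph{separately}, so that the vanishing geometric factor $\frac{(t+\rho)^n-(t-\rho)^n}{(t-\rho)^n}\to0$ multiplies the constant instead of being added to it. Making this rigorous—handling the cut locus of $q$ via the usual barrier/distributional form of the comparison, checking that the inequality of Theorem 3.4 holds verbatim with base point $\gamma(t)$ (it does, since it only uses $Ric_f\geq0$ and $0\leq f\leq\frac{k^2}{4}$, both center-independent), and passing between weighted and unweighted volumes through $e^{-k^2/4}\leq e^{-f}\leq1$—is where the real work lies.
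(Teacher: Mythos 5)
Your proposal is correct and follows essentially the same route as the paper: the upper bound is read off from Theorem 3.4, and the linear lower bound is the Calabi--Yau argument along a ray, using the annulus relative volume comparison that the paper derives from (3.27) by cross-multiplying and integrating the per-direction inequality $A_\vartheta(s_2)s_1^{n-1}\leq e^{3k^2/4}A_\vartheta(s_1)s_2^{n-1}$ over the two radial intervals. Your reformulation via the monotone quantity $G$ is an equivalent (and slightly more carefully justified) way of obtaining the same shell-to-inner-ball estimate, and your diagnosis of why the naive ``outer minus inner'' route would fail matches exactly the point the paper's annulus comparison is designed to avoid.
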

\begin{proof}
(of Theorem 4.1) Since $|A|\leq \sqrt{\frac{1-\lambda^2}{3}}$, for any $p \in M$,
by Theorem 3.4, we have
\begin{equation}
Vol(B(p,R))\leq C_1 R^{n}.      \\
\end{equation}
On the other hand, for $T_1\leq T_2, R_1\leq R_2, T_1\leq  R_1, T_2\leq R_2,$ and $q \in M$,  we let  $A_q(R_1,R_2)$ be the set of $x \in M$ such that $R_1\leq r(x) \leq R_2$ and $V(T_1,T_2)= vol(S^{n-1}(1))\int_{T_1}^{T_2} t^{n-1}dt$, where $r(x)=d(q,x)$. Then
 by (3.27), we can get
\begin{equation}
\frac{Vol(A_q(R_1,R_2))}{Vol(A_q(T_1,T_2))}\leq e^{\frac{3k^2}{4}}\frac{V(R_1,R_2)}{V(T_1,T_2)}.
\end{equation}
Let $\Gamma$ be a geodesic based at $p$ in $M$, by $|A|\leq \sqrt{\frac{1-\lambda^2}{3}}$ and the annulus relative volume comparison
(4.3) to annuli centered at $\Gamma(t)$, we have
\begin{eqnarray}
& &\frac{Vol(A_{\Gamma(t)}(t-1,t+1))}{Vol(B(\Gamma(t),t-1))}\leq e^{\frac{3k^2}{4}}\frac{(t+1)^{n}-(t-1)^{n}}{(t-1)^{n}}      \nonumber \\
& &\ \ \ \ \ \ \ \ \ \ \ \ \ \ \ \ \  \ \ \ \ \ \ \ \ \ \ \ \ \ \   =\frac{\tilde{c}(n,k)}{t}.     \nonumber \\
\end{eqnarray}
By $B(\Gamma(0),1)\subset A_{\Gamma(t)}(t-1,t+1)$, we get
\begin{eqnarray}
& &Vol(B(\Gamma(t),t-1))\geq \frac{t}{\tilde{c}(n,k)}Vol(A_{\Gamma(t)}(t-1,t+1))      \nonumber \\
& &\ \ \ \ \ \ \ \ \ \ \ \ \ \ \ \ \  \ \ \ \ \ \ \ \  \geq \frac{t}{\tilde{c}(n,k)}Vol(B(p,1))    \nonumber \\
& &\ \ \ \ \ \ \ \ \ \ \ \ \ \ \  \ \ \  \ \ \ \ \ \ \  =\tilde{\tilde{c}} t.     \nonumber \\
\end{eqnarray}
Therefore,
\begin{equation}
Vol(B(p,R))\geq Vol(B(\Gamma(\frac{R}{2}),\frac{R}{2}-1))\geq C_2 R.
\end{equation}
Combining (4.2) and (4.6), the result follows easily.
\end{proof}

In [21], Milnor observed that polynomial volume growth on the universal cover of a manifold restricts the structure of its fundamental group. Thus Theorem 3.4 also implies the following extension of Milnor's Theorem.
\begin{thm}
Let $X : M \rightarrow \overline{\mathcal{B}}^{n+1}_{k} (0) \subset R^{n+1}$ be  an $n$-dimensional complete $\lambda$-hypersurface with $|A|\leq \sqrt{\frac{1-\lambda^2}{3}}$. Then any finite generated subgroup of  the fundamental group
of $M$ has polynomial
growth of order at most $n$.
\end{thm}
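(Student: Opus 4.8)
The plan is to run Milnor's original counting argument on the universal cover $\widetilde{M}$, using Theorem 3.4 as the volume comparison input. First I would pass to the Riemannian universal covering $\pi : \widetilde{M} \to M$, equipping $\widetilde{M}$ with the pulled-back metric $\widetilde{g} = \pi^{*} g$ and the pulled-back weight $\widetilde{f} = f \circ \pi$. Since $\pi$ is a local isometry, the $\infty$-Bakry-Emery condition $Ric_{\widetilde{f}} \geq 0$ and the pointwise bound $\widetilde{f} \leq \frac{k^2}{4}$ are both inherited from $M$. The proof of Theorem 3.4 is entirely local in nature (it uses only these two facts together with the Jacobi-field and second-variation comparison along minimizing geodesics), so the relative volume comparison
\[
\frac{Vol(B(\widetilde{p}, R_2))}{Vol(B(\widetilde{p}, R_1))} \leq e^{\frac{3k^2}{4}} \left( \frac{R_2}{R_1} \right)^{n}, \qquad 0 < R_1 \leq R_2,
\]
holds verbatim on $\widetilde{M}$ for every $\widetilde{p} \in \widetilde{M}$.

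Next I would fix a finitely generated subgroup $G \leq \pi_1(M)$ with a symmetric generating set $\{g_1^{\pm 1}, \ldots, g_s^{\pm 1}\}$, and recall that $G$ acts on $\widetilde{M}$ by deck transformations. These are isometries of $\widetilde{g}$, and they also preserve $\widetilde{f}$, because $\pi \circ \sigma = \pi$ for every deck transformation $\sigma$. Fix a base point $\widetilde{p} \in \widetilde{M}$ and set $a = \max_i d(\widetilde{p}, g_i \widetilde{p})$. For the growth function $N(m) = \#\{ g \in G : |g| \leq m \}$, where $|g|$ denotes word length, the triangle inequality together with the fact that the $g_i$ act by isometries gives $d(\widetilde{p}, g\widetilde{p}) \leq m a$ whenever $|g| \leq m$.

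The counting step then proceeds as follows. Since $G$ acts freely and properly discontinuously, the orbit $G\widetilde{p}$ is discrete, so I may choose $\varepsilon > 0$ with $2\varepsilon < \min_{e \neq g \in G} d(\widetilde{p}, g\widetilde{p})$; then the balls $\{ B(g\widetilde{p}, \varepsilon) \}_{g \in G}$ are pairwise disjoint, and they are mutually congruent (hence of equal volume) since $G$ acts by isometries. For $|g| \leq m$ each such ball lies inside $B(\widetilde{p}, m a + \varepsilon)$, whence
\[
N(m)\, Vol(B(\widetilde{p}, \varepsilon)) = \sum_{|g| \leq m} Vol(B(g\widetilde{p}, \varepsilon)) \leq Vol(B(\widetilde{p}, m a + \varepsilon)).
\]
Applying the relative comparison above with $R_1 = \varepsilon$ and $R_2 = m a + \varepsilon$ yields
\[
N(m) \leq \frac{Vol(B(\widetilde{p}, m a + \varepsilon))}{Vol(B(\widetilde{p}, \varepsilon))} \leq e^{\frac{3k^2}{4}} \left( \frac{m a + \varepsilon}{\varepsilon} \right)^{n},
\]
and since $a, \varepsilon, k$ are fixed the right-hand side is bounded by $C m^{n}$ for all large $m$. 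Hence $G$ has polynomial growth of order at most $n$.

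The main obstacle I expect is the first paragraph: one must check carefully that Theorem 3.4 transfers to the universal cover. The subtlety is that $f = \frac{|X|^2}{4}$ is defined through the ambient immersion $X$, which need not lift to an immersion of $\widetilde{M}$. The resolution is that only the pulled-back function $\widetilde{f} = f \circ \pi$ ever enters the comparison argument, and it automatically satisfies both $\widetilde{f} \leq \frac{k^2}{4}$ and $Ric_{\widetilde{f}} \geq 0$, so no genuine use of the immersion on $\widetilde{M}$ is required. Everything else is the classical Milnor packing estimate.
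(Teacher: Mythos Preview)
Your argument is correct and is exactly the approach the paper has in mind: the paper gives no detailed proof, merely citing Milnor's observation that polynomial volume growth on the universal cover bounds the growth of finitely generated subgroups of $\pi_1$, and pointing to Theorem~3.4 as the relevant volume input. Your write-up fills in precisely those details.

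One small remark on your last paragraph: your worry about the immersion is unnecessary. The composition $X\circ\pi:\widetilde{M}\to R^{n+1}$ \emph{is} an isometric immersion (since $\pi$ is a local isometry), and it makes $\widetilde{M}$ itself a complete $\lambda$-hypersurface in $\overline{\mathcal{B}}^{n+1}_k(0)$ with the same pointwise bound $|A|\le\sqrt{(1-\lambda^2)/3}$, so Theorem~3.4 applies to $\widetilde{M}$ directly without any need to reinspect its proof. (Note also that the proof of Theorem~3.4 uses both $f\le k^2/4$ and $f\ge 0$; the latter is automatic since $\widetilde f=\tfrac14|X\circ\pi|^2\ge 0$.)
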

Note that the relation between the fundamental group and the first Betti number given by the Hurewicz Theorem
[34], Ricci curvature can also give control on the first Betti number. By the same assertions as in M. Gromov [13], we can prove the following theorem by Theorem 3.1.
\begin{thm}
Let $X : M \rightarrow \overline{\mathcal{B}}^{n+1}_{k} (0) \subset R^{n+1}$ be  an $n$-dimensional complete $\lambda$-hypersurface with $|A|\leq \alpha$, $diam_M\leq d$. Then  $b_1(M) \leq c(n,k,\alpha,d^2)$.
\end{thm}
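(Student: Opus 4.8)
The plan is to adapt Gromov's estimate for the first Betti number [13], using the relative volume comparison of Theorem 3.1 in place of the classical Bishop--Gromov inequality. First I would record that $M$ is compact: being complete with $\mathrm{diam}_M\le d$, Hopf--Rinow makes $M=\overline{B}(p,d)$ compact, so $b_1(M)=\mathrm{rank}\,H_1(M,\mathbf{Z})=:b$ is finite and it suffices to bound $b$. Let $G=H_1(M,\mathbf{Z})/\mathrm{Tor}\cong\mathbf{Z}^{b}$ and let $\pi\colon\bar M\to M$ be the associated abelian covering, whose deck group is $G$. The composition $X\circ\pi\colon\bar M\to\overline{\mathcal B}^{n+1}_k(0)$ is again a complete $\lambda$-hypersurface with $|A|\le\alpha$ and $f=\tfrac{|X\circ\pi|^2}{4}\le\tfrac{k^2}{4}$, and the proof of Theorem 3.1 is entirely local in geodesic polar coordinates; hence the volume ratio estimate of Theorem 3.1 holds verbatim for geodesic balls of $\bar M$, on which $G$ acts by isometries with compact quotient $M$.

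Next I would set up the counting of $G$ against volume. Fix $\bar p\in\pi^{-1}(p)$; since $\mathrm{diam}_M\le d$, every point of $\bar M$ lies within $d$ of the orbit $G\bar p$, so the Dirichlet domain $\mathcal D=\{y: d(y,\bar p)\le d(y,g\bar p)\ \forall g\in G\}$ satisfies $\mathcal D\subset B(\bar p,d)$ and $\mathrm{Vol}(\mathcal D)=\mathrm{Vol}(M)=:v$. Counting disjoint translates $g\mathcal D$ yields, for $N(R):=\#\{g\in G: d(\bar p,g\bar p)\le R\}$, the two-sided comparison $\mathrm{Vol}(B(\bar p,R-d))\le N(R)\,v\le \mathrm{Vol}(B(\bar p,R+d))$, so $N(R)$ and $\mathrm{Vol}(B(\bar p,R))/v$ share the same growth rate. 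The same diameter bound shows that the elements of displacement $\le 2d$ generate $G$; choosing a Gromov short basis $\gamma_1,\dots,\gamma_b$ (each $\gamma_{i+1}$ of minimal displacement outside $\langle\gamma_1,\dots,\gamma_i\rangle$) gives a basis of $G$ with $\ell_i:=d(\bar p,\gamma_i\bar p)\le 2d$ and the separation $d(\gamma_i\bar p,\gamma_j\bar p)\ge \ell_{\max(i,j)}$.

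The bound on $b$ then splits according to the sign of the effective curvature $\tfrac{1-\lambda^2-3\alpha^2}{2}$. When $\lambda^2+3\alpha^2<1$ one has $Ric_f\ge c>0$ with $f$ bounded, so a Bakry--Emery Myers argument (as in Section 6, applied on the universal cover, where $f$ is still bounded) forces $\pi_1(M)$ finite and $b=0$. When $\lambda^2+3\alpha^2=1$ the model is Euclidean and Theorem 3.1 controls polynomial volume growth: combining $N(R)\sim\mathrm{Vol}(B(\bar p,R))/v$ with $N(R)\ge c_0R^{\,b}$ for large $R$ (lattice-point growth of $\mathbf{Z}^{b}$) and $\mathrm{Vol}(B(\bar p,R))\le e^{k^2/4}V_{\alpha,\lambda,k}(R)\le CR^{\,n+k^2/2}$ gives, after passing to $\log N(R)/\log R$ as $R\to\infty$, the clean bound $b\le n+\tfrac{k^2}{2}$. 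The remaining case $\lambda^2+3\alpha^2>1$ is the substantive one: here the model volume grows exponentially, the asymptotic argument is vacuous, and one must balance the lattice count against Theorem 3.1 at a finite scale comparable to $d$, exactly as in Gromov [13]. Concretely I would estimate, at each dyadic scale of the $\ell_i$, the number of short-basis vectors by packing the $\ell_i/3$-separated points $\gamma_i\bar p$ into a ball via the relative volume comparison, and then bound the number of relevant scales, to produce $b\le c(n,k,\alpha,d^2)$.

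The main obstacle is precisely this last balancing in the negatively curved regime: the relative volume comparison alone bounds the number of generators at any single scale, but one must rule out an unbounded number of scales accumulating as $\ell_1\to0$ (short loops), and it is here that the quantitative dependence on $\tfrac{1-\lambda^2-3\alpha^2}{2}$ and $d^2$---rather than on the individual $\ell_i$---has to be extracted, reproducing Gromov's packing argument in the weighted setting. A secondary point requiring care is that Theorem 3.1 carries the extra factor $e^{k^2/4}$ and the shifted effective dimension $n+\tfrac{k^2}{2}$; these must be tracked through the comparison so that the final constant collapses to the stated form $c(n,k,\alpha,d^2)$.
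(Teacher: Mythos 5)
Your proposal follows essentially the same route as the paper: the paper gives no written proof of Theorem 4.3 at all, stating only that it follows ``by the same assertions as in M. Gromov [13]'' combined with Theorem 3.1, and your argument --- abelian cover, Dirichlet domain, short basis, and packing via the relative volume comparison of Theorem 3.1 --- is precisely that reduction, worked out in more detail than the paper provides. The balancing step in the regime $\lambda^2+3\alpha^2>1$ that you flag as the remaining obstacle is likewise left entirely to Gromov's original packing argument by the authors, so your sketch is at least as complete as the paper's own treatment.
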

Theorem 3.1 also implies the following extensions
theorems  of Anderson [1]. We will leave it for readers.
\begin{thm}
For the class of $n$-dimensional $\lambda$-hypersurfaces $M$ with $|A|\leq \alpha, vol\geq v>0, diam \leq d,$ which contained in the compact ball $\overline{\mathcal{B}}^{n+1}_k(0) \subset R^{n+1}$, there are only finitely many isomorphism classes of the fundamental group of $M$.
\end{thm}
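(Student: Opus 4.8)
The plan is to adapt Anderson's finiteness argument [1] to the weighted setting, using the volume comparison of Theorem 3.1 in place of the classical Bishop--Gromov inequality. First I would pass to the universal Riemannian cover $\pi:\widetilde{M}\to M$ with deck transformation group $\Gamma=\pi_1(M)$ acting freely by isometries. The immersion lifts to $\widetilde{X}=X\circ\pi:\widetilde{M}\to\overline{\mathcal{B}}^{n+1}_k(0)$, which is again a complete $\lambda$-hypersurface with $|A|\le\alpha$ and weight $\widetilde{f}=|\widetilde{X}|^2/4\le k^2/4$; hence the $\infty$-Bakry--Emery bound $Ric_f\ge\frac{1-\lambda^2-3\alpha^2}{2}$ and the relative volume comparison of Theorem 3.1 hold verbatim on $\widetilde{M}$. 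Fixing $\tilde p\in\pi^{-1}(p)$ and letting $D$ be the Dirichlet fundamental domain centered at $\tilde p$, the bound $diam_M\le d$ gives $D\subset B(\tilde p,d)$, the translates $\{\gamma D\}_{\gamma\in\Gamma}$ tile $\widetilde{M}$, and $Vol(\gamma D)=Vol(M)\ge v$ for every $\gamma$.

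Next I would bound the number of generators. Following Anderson, I choose short generators $\gamma_1,\gamma_2,\dots$ inductively, each $\gamma_i$ minimizing $d(\tilde p,\gamma\tilde p)$ among $\gamma\notin\langle\gamma_1,\dots,\gamma_{i-1}\rangle$. Since $diam_M\le d$, every short generator satisfies $r_i:=d(\tilde p,\gamma_i\tilde p)\le 2d$, so $S=\{\gamma_i\}$ generates $\Gamma$, and minimality forces the separation $d(\gamma_i\tilde p,\gamma_j\tilde p)\ge\max(r_i,r_j)$ for $i\ne j$. Consequently the balls $B(\gamma_i\tilde p,r_i/2)$ are pairwise disjoint and contained in $B(\tilde p,3d)$. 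I would feed these disjoint balls into the relative volume comparison of Theorem 3.1, estimating each $Vol(B(\gamma_i\tilde p,r_i/2))$ from below in terms of $v$ (via the fundamental domain) and the model volume $V_{\alpha,\lambda,k}$, and conclude that the number $N=|S|$ of generators is bounded by a constant $c_1(n,k,\alpha,\lambda,d,v)$.

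With the generators in hand I would control the relations. Since $\widetilde{M}$ is simply connected, a standard filling argument shows that $\Gamma$ admits a presentation $\langle S\mid R\rangle$ in which every relation in $R$ is a word of length at most three in $S$ (each arising from a triangle $\gamma_i\gamma_j\gamma_k=e$ with all factors in $S$). There are at most $N^3$ such words, so the number of generators and the number and length of the relations are all bounded by constants depending only on $n,k,\alpha,\lambda,d,v$. Hence only finitely many group presentations can occur in this class, and therefore the fundamental groups realize only finitely many isomorphism types.

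I expect the main obstacle to be the bound on the number of generators, precisely because the hypotheses supply only a Ricci-type ($\infty$-Bakry--Emery) lower bound rather than a sectional curvature bound, so Toponogov/angle comparison is unavailable and the distance separation cannot be converted into angular separation on $S^{n-1}$. The delicate case is that of short generators with very small displacement $r_i$; here the volume lower bound $v$ together with the scale behaviour of $V_{\alpha,\lambda,k}$ (which makes the model behave like the effective dimension $n+\frac{k^2}{2}$) must be exploited exactly as in Anderson's Ricci-only argument, with Theorem 3.1 providing the weighted relative volume comparison. A secondary technical point is justifying the length-three relation bound through a filling of loops in $\widetilde{M}$ that remains in the region where the comparison estimates are valid.
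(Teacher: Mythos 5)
The paper itself offers no proof of this statement --- it simply remarks that Theorem 3.1 ``implies'' Anderson's finiteness theorem and leaves the details to the reader --- so your plan of transplanting Anderson's argument with Theorem 3.1 in place of Bishop--Gromov is exactly the intended route. However, there is a genuine gap at the step you yourself flag as delicate, and it is not merely technical. Anderson's count of the elements $\gamma$ with $d(\tilde p,\gamma\tilde p)\le 2d$ (equivalently, your bound on $N=|S|$) requires an \emph{absolute} upper bound on $Vol(B(\tilde p,R))$ for $R$ comparable to $d$: one packs the disjoint translates $\gamma D$, each of volume $Vol(M)\ge v$, into $B(\tilde p,3d)$ and divides. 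Theorem 3.1 is only a \emph{relative} comparison, and its model has effective dimension $n+\frac{k^2}{2}$, so the usual device of letting $R_1\to 0$ to extract an absolute bound fails: $Vol(B(p,R_1))/V_{\alpha,\lambda,k}(R_1)\sim c\,R_1^{-k^2/2}\to\infty$. Your alternative of packing the separated balls $B(\gamma_i\tilde p,r_i/2)$ and lower-bounding each by the relative comparison does cancel the unknown total volume, but it only controls the generators with $r_i\ge\rho$ for a fixed $\rho$; the generators with arbitrarily small displacement are precisely the ones that must be handled by the fundamental-domain count, which again needs the absolute volume bound. So as written the loop does not close.

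The gap is easy to fill, but by stepping outside Theorem 3.1: the Gauss equation (2.7) together with $|A|\le\alpha$ gives the \emph{pointwise unweighted} bound $Ric\ge -(\sqrt{n}+1)\alpha^2\,g$, so the classical Bishop inequality yields $Vol(B(\tilde p,R))\le V_{-\Lambda}(R)$ with $\Lambda=\Lambda(n,\alpha)$, and then $\#\{\gamma:\,d(\tilde p,\gamma\tilde p)\le 2d\}\le V_{-\Lambda}(3d)/v$ follows at once and handles short generators uniformly. (Alternatively, Theorem 3.4 supplies an absolute bound when $\alpha\le\sqrt{(1-\lambda^2)/3}$, or one can invoke the Wei--Wylie absolute weighted comparison using $|\nabla f|\le k/2$.) A secondary correction: Gromov's length-three presentation lemma applies when the generating set is \emph{all} elements of displacement at most $2d+\epsilon$, not a short basis chosen by your inductive minimization --- relations among a short basis need not reduce to length three. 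Taking $S=\{\gamma:\,d(\tilde p,\gamma\tilde p)\le 2d+\epsilon\}$, bounding $|S|$ as above, and listing all identities $\gamma_i\gamma_j=\gamma_k$ in $S$ gives the finitely many presentations and completes the argument.
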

The volume comparison has many other geometric applications, such as, in the Gromov-Hausdorff convergence theory, in the rigidity and pinching theory. We will leave these statements to the interested readers.

\section{Estimate of the exterior radius}
The $n$-dimensional sphere $S^{n}(k)$ with radius $k=\sqrt{\lambda^2+2n}+\lambda$ is a compact $\lambda$-hypersurface and contained in the compact closed ball $\overline{\mathcal{B}}^{n+1}_k(0)\subset R^{n+1}$.
Our first remark is that if a complete $\lambda$-hypersurface  with controlled intrinsic
volume growth is contained in some Euclidean closed ball $\overline{\mathcal{B}}^{n+1}_k(0)$ with center $0$ and radius $k$, then there is an obvious
relation among $\lambda$, the radius $k$ and the dimension $n$. To prove this, we need the following elementary lemma.
\begin{lem}
(see[27])Let $(M, dvol_f)$ be a geodesically complete weighted manifold satisfying the volume growth condition
\begin{equation}
\frac{R}{log(Vol_f B(p,R))}
\notin L^1(+\infty),
\end{equation}
Then the weak maximum principle at infinity for the $f$-Laplacian holds on $M$.
\end{lem}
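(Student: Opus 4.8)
The plan is to argue by contradiction and reduce the assertion to a sharp volume-growth test of Grigor'yan type for the weighted measure $dvol_f$. Recall that the weak maximum principle at infinity for $\triangle_f$ asserts that for every $u \in C^2(M)$ with $u^\ast := \sup_M u < +\infty$ there is a sequence $\{x_k\} \subset M$ with $u(x_k) > u^\ast - 1/k$ and $(\triangle_f u)(x_k) < 1/k$. Suppose it fails. Then one can produce a function $u$ bounded from above, together with constants $\gamma < u^\ast$ and $\varepsilon > 0$, such that $\triangle_f u \geq \varepsilon$ on the superlevel set $\Omega_\gamma := \{x \in M : u(x) > \gamma\}$. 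Were $\Omega_\gamma$ relatively compact, $u$ would attain an interior maximum on it, at which $\nabla u = 0$ and hence $\triangle_f u \leq 0$, contradicting the inequality above; so by geodesic completeness $\Omega_\gamma$ is noncompact and reaches to infinity, which is where the global hypothesis (5.1) must enter.

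Next I would convert this pointwise inequality into an integral one. Set $\phi := (u - \gamma)_+$, a bounded function with $0 \leq \phi \leq u^\ast - \gamma$ satisfying $\triangle_f \phi \geq \varepsilon$ on $\Omega_\gamma$, and test it against a cutoff $\psi$, integrating by parts with respect to $dvol_f = e^{-f}\,dvol$ and using that $\triangle_f$ is self-adjoint in $L^2(dvol_f)$. A crude Caccioppoli estimate of the shape $\int_M \psi^2 |\nabla\phi|^2\,dvol_f \leq C \int_M \phi^2 |\nabla\psi|^2\,dvol_f$ only delivers a contradiction under the quadratic bound $Vol_f B(p,R) = o(R^2)$, which is much weaker than (5.1). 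To reach the sharp threshold recorded by $R/\log Vol_f B(p,R) \notin L^1(+\infty)$, I would follow Grigor'yan's scheme and choose radial test functions $\psi = \xi(r(x))$ tuned to the $f$-volume profile $V(R) := Vol_f B(p,R)$, deriving a differential inequality for a weighted energy $G(R)$ of $\phi$ over the annuli $B(p,2R)\setminus B(p,R)$; integrating it, the divergence of $\int^{+\infty} R\,(\log V(R))^{-1}\,dR$ is exactly what forces $G$ to blow up while the bounded subsolution structure keeps it finite, a contradiction.

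The hard part is precisely this last sharp volume-integral estimate: the delicate choice of the profile $\xi$ and the ODE comparison (or iteration) that turn (5.1) into an \emph{a priori} bound on $G$, since the crude energy inequality by itself is not enough. Conceptually the cleanest packaging is to invoke the equivalence, due to Pigola--Rigoli--Setti, between the weak maximum principle at infinity for $\triangle_f$ and the stochastic completeness of $(M, dvol_f)$ (equivalently, conservativeness of the $f$-heat semigroup $e^{t\triangle_f}$), and then to apply Grigor'yan's volume criterion in the weighted setting, which says that (5.1) forces stochastic completeness. Since this chain is exactly the content of the cited reference [27], I would either quote it verbatim or reproduce the Grigor'yan argument sketched above; either way the conclusion follows.
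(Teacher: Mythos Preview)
The paper does not supply a proof of this lemma at all: it is stated with the attribution ``(see [27])'' and used as a black box, so there is nothing to compare your argument against. Your closing remark that one could simply ``quote it verbatim'' from the cited reference is precisely what the paper does; the Grigor'yan/PRS sketch you outline is a reasonable reconstruction of what lies behind that citation, but it goes well beyond anything the authors present.
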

\begin{thm}
Let $X : M \rightarrow \overline{\mathcal{B}}^{n+1}_{k} (0) \subset R^{n+1}$ be  an $n$-dimensional complete $\lambda$-hypersurface whose intrinsic volume growth satisfies
\begin{equation}
\frac{R}{log(Vol B(p,R))}
\notin L^1(+\infty),
\end{equation}
where $B(p,R)$ is the geodesic ball of $\lambda$-hypersurface $M^n$ with radius $R$ centered at $p$, and $\mathcal{B}^{n+1}_k(0)$ denotes the Euclidean ball with center $0$ and radius $k$. Then
\begin{equation}
k\geq \sqrt{\lambda^2+2n}-|\lambda|.
\end{equation}
\end{thm}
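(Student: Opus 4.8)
The plan is to apply the weak maximum principle at infinity (Lemma 5.1) to the bounded function $u=|X|^2$, after reducing its drifted Laplacian to a clean closed form. First I would record the consequences of the hypothesis $X(M)\subset\overline{\mathcal{B}}^{n+1}_k(0)$: pointwise $|X|^2\leq k^2$, so $u=|X|^2$ is bounded above and the weight $f=\tfrac{|X|^2}{4}\leq\tfrac{k^2}{4}$ is bounded. Since $e^{-f}\leq 1$, this gives $Vol_f B(p,R)\leq Vol B(p,R)$, hence $\frac{R}{\log Vol_f B(p,R)}\geq\frac{R}{\log Vol B(p,R)}$ for large $R$, so the intrinsic growth condition (5.2) forces the weighted condition (5.1). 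Together with the completeness of $M$, this allows me to invoke Lemma 5.1, and the weak maximum principle at infinity for $\triangle_f$ holds on $M$.

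The key step is the computation of $\triangle_f|X|^2$. Using Beltrami's formula $\triangle X=-HN$, the identity $|\nabla X|^2=n$, and $\nabla f=\tfrac12 X^T$, I would obtain
$$\triangle_f|X|^2=\triangle|X|^2-\langle\nabla f,\nabla|X|^2\rangle=\bigl(2n-2H\langle X,N\rangle\bigr)-\bigl(|X|^2-\langle X,N\rangle^2\bigr).$$
Now I substitute the $\lambda$-hypersurface relation $\langle X,N\rangle=2(H-\lambda)$ in the form $\langle X,N\rangle^2=2H\langle X,N\rangle-2\lambda\langle X,N\rangle$; the unbounded term $2H\langle X,N\rangle$ cancels and I am left with the clean identity
$$\triangle_f|X|^2=2n-|X|^2-2\lambda\langle X,N\rangle.$$
The pleasant feature here is that $H$ drops out entirely, which matters because in this theorem no bound $|A|\leq\alpha$ is assumed and so $H$ is a priori uncontrolled.

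Next I would convert this into a pointwise lower bound. Since $|N|=1$, the Cauchy–Schwarz inequality gives $|\langle X,N\rangle|\leq|X|\leq k$, and $|X|^2\leq k^2$, so at every point of $M$
$$\triangle_f|X|^2\geq 2n-k^2-2|\lambda|k.$$
Applying the weak maximum principle at infinity to the bounded function $u=|X|^2$ produces a sequence $\{x_j\}\subset M$ with $\triangle_f|X|^2(x_j)<\tfrac1j$. Combining with the lower bound yields $2n-k^2-2|\lambda|k\leq\tfrac1j$ for every $j$, and letting $j\to\infty$ gives $k^2+2|\lambda|k-2n\geq 0$. Solving this quadratic inequality for $k>0$ gives $k\geq\sqrt{\lambda^2+2n}-|\lambda|$, which is the asserted estimate.

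I expect the main obstacle to be the algebraic cancellation in the second step: one must use the $\lambda$-equation precisely so that the mean curvature term vanishes, since otherwise the uncontrolled $H$ would block the pointwise estimate. A secondary technical point worth checking carefully is that the weak maximum principle is genuinely available, i.e. that the bound $f\leq\tfrac{k^2}{4}$ really transfers the stated intrinsic growth hypothesis (5.2) into the weighted hypothesis (5.1) required by Lemma 5.1.
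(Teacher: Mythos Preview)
Your proof is correct and follows essentially the same route as the paper: compute $\triangle_f|X|^2=2n-|X|^2-2\lambda\langle X,N\rangle$, transfer the intrinsic volume growth hypothesis to the weighted one via the boundedness of $f$, invoke Lemma~5.1, and solve the resulting quadratic in $k$. The only cosmetic difference is that the paper keeps the sharper lower bound $\triangle_f|X|^2\geq 2n-2|\lambda||X|-|X|^2$ and applies the weak maximum principle to obtain the inequality with $\sup_M|X|$ before bounding by $k$, whereas you insert $|X|\leq k$ one step earlier; this has no effect on the conclusion.
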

\begin{proof}
(of Theorem 5.2)
Since $\triangle X = - H N$ on the hypersurface and $H-\frac{<N,X>}{2}=\lambda$ on the $\lambda$-hypersurface, then
\begin{eqnarray}
& &\triangle |X|^2=2|\nabla X|^2+2<\triangle X, X>        \nonumber \\
& &\ \ \ \ \ \ \ \ \ =2|\nabla X|^2-2H<N, X>     \nonumber \\
& &\ \ \ \ \ \ \ \ \ =2|\nabla X|^2-2(\frac{<N,X>}{2}+\lambda)<N, X>     \nonumber \\
& &\ \ \ \ \ \ \ \ \ =2n-2\lambda<N, X>- <N,X>^{2}. \nonumber \\
\end{eqnarray}
Also note that $\nabla|X|^2 = 2 X^T$, where $X^T$ is the tangential projection of $X$, we can get
\begin{eqnarray}
& &\triangle_f |X|^2=\triangle |X|^2- <\nabla |X|^2,\nabla f>       \nonumber \\
& &\ \ \ \ \ \ \ \ \ \ \  =\triangle |X|^2- <\nabla |X|^2,\nabla \frac{|X|^2}{4}>     \nonumber \\
& &\ \ \ \ \ \ \ \ \ \ \  =\triangle |X|^2- \frac{1}{4}<2 X^T,2 X^T>     \nonumber \\
& &\ \ \ \ \ \ \ \ \ \ \  =2n-2\lambda<N, X>- <N,X>^{2}- |X^T|^2     \nonumber \\
& &\ \ \ \ \ \ \ \ \ \ \  =2n-2\lambda<N, X>- |X|^2 \nonumber \\
& &\ \ \ \ \ \ \ \ \ \ \  \geq 2n-2|\lambda||X|- |X|^2. \nonumber \\
\end{eqnarray}
On the other hand, since $c^{-1} d vol_f \leq d vol \leq c d vol_f$
for a large enough constant $c > 1$, then
\begin{equation}
R \rightarrow \frac{R}{log(Vol_f B(p,R))}
\notin L^1(+\infty),
\end{equation}
which implies that on the $\lambda$-hypersurface the weak maximum
principle holds at infinity for the drifted Laplacian $\triangle_f$ (Lemma 5.1). Therefore
\begin{equation}
0 \geq 2n- 2|\lambda|sup_M|X|- sup_M|X|^2\geq 2n- 2|\lambda|k- k^2,
\end{equation}
and the claimed lower estimate on $k$ follows. This completes the proof of Theorem 5.2.
\end{proof}

By(4.1), we can specialize Theorem 5.2 to the following
\begin{cor}
Let $X : M \rightarrow \overline{\mathcal{B}}^{n+1}_{k} (0) \subset R^{n+1}$ be  an $n$-dimensional complete $\lambda$-hypersurface with $|A|\leq \sqrt{\frac{1-\lambda^2}{3}}$, then
\begin{equation}
k\geq \sqrt{\lambda^2+2n}-|\lambda|.
\end{equation}
\end{cor}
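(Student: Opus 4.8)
The plan is to deduce this corollary directly from Theorem 5.2 by verifying that the curvature hypothesis $|A|\leq\sqrt{\frac{1-\lambda^2}{3}}$ forces the intrinsic volume-growth condition (5.2). In other words, the entire task reduces to checking that the polynomial volume bound supplied by Theorem 4.1 is strong enough to guarantee that $\frac{R}{\log(Vol\,B(p,R))}\notin L^1(+\infty)$; once this is in hand, the estimate on $k$ follows verbatim from Theorem 5.2.

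First I would invoke Theorem 4.1. Since $|A|\leq\sqrt{\frac{1-\lambda^2}{3}}$, for every $p\in M$ there is a constant $C_1>0$ with $Vol(B(p,R))\leq C_1 R^{n}$. Taking logarithms yields $\log(Vol\,B(p,R))\leq \log C_1 + n\log R$ for all large $R$, and therefore
\[
\frac{R}{\log(Vol\,B(p,R))}\;\geq\;\frac{R}{\log C_1 + n\log R}.
\]
Since the right-hand side behaves like $\frac{R}{n\log R}\to\infty$ as $R\to\infty$, its integral over any half-line $[R_0,+\infty)$ diverges; hence so does the integral of the left-hand side. This is precisely the assertion that $\frac{R}{\log(Vol\,B(p,R))}\notin L^1(+\infty)$, so hypothesis (5.2) of Theorem 5.2 is satisfied.

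With (5.2) verified, I would then apply Theorem 5.2 to the complete $\lambda$-hypersurface $X:M\to\overline{\mathcal{B}}^{n+1}_{k}(0)$ and read off the conclusion $k\geq\sqrt{\lambda^2+2n}-|\lambda|$. I do not expect any serious obstacle: the argument is a routine specialization, and the only point requiring a little care is the elementary comparison showing that polynomial volume growth yields the non-integrability in (5.2), which ultimately rests on the standard fact that $R/\log R$ fails to be integrable at infinity.
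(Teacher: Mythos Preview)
Your proposal is correct and follows exactly the route the paper takes: the paper's proof consists of the single line ``By (4.1), we can specialize Theorem 5.2 to the following,'' which is precisely your argument of using the polynomial volume bound from Theorem~4.1 to verify the non-integrability hypothesis (5.2) and then invoking Theorem~5.2. Your write-up merely spells out in detail the elementary step that $Vol(B(p,R))\leq C_1 R^n$ forces $R/\log(Vol\,B(p,R))\notin L^1(+\infty)$, which the paper leaves implicit.
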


\section{Estimate of the intrinsic diameter}
The purpose of this section  is to generalize the well-known Myers' theorem [23]
on a complete and connected $\lambda$-hypersurface with $Ric_f \geq \frac{1-\lambda^2-3\alpha^2}{2} >0$ (i.e. $|A|\leq\alpha<\sqrt{\frac{1-\lambda^2}{3}}$).  In particular, we obtain the following
\begin{thm}
Let $X : M \rightarrow \overline{\mathcal{B}}^{n+1}_{k} (0) \subset R^{n+1}$ be  an $n$-dimensional complete $\lambda$-hypersurface with $|A|\leq \alpha < \sqrt{\frac{1-\lambda^2}{3}}$. Then $M$ is compact and the intrinsic diameter satisfies
\begin{equation}
diam(M) \leq \sqrt{\frac{2(n-1)(n+k^2-1)}{1-\lambda^2-3\alpha^2}}\pi.
\end{equation}
\end{thm}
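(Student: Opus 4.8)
The plan is to prove a Bonnet--Myers type length bound via the second variation of arc length, feeding in the two structural facts available here. First, by (3.2) (equivalently the computation leading to (2.12)), the hypothesis $|A|\le\alpha<\sqrt{(1-\lambda^2)/3}$ gives the uniform positive bound $Ric_f(\dot\gamma,\dot\gamma)\ge\rho:=\frac{1-\lambda^2-3\alpha^2}{2}>0$ in every unit direction. Second, since $X(M)\subset\overline{\mathcal B}^{n+1}_k(0)$, the weight obeys $0\le f=\frac{|X|^2}{4}\le\frac{k^2}{4}$. I would reduce the theorem to a single length estimate: if every unit-speed minimizing geodesic $\gamma:[0,L]\to M$ satisfies $L\le\sqrt{2(n-1)(n+k^2-1)/(1-\lambda^2-3\alpha^2)}\,\pi$, then $M$ is bounded and therefore, being complete, compact by Hopf--Rinow, and the diameter bound is read off at once.

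For a minimizing $\gamma$, choose parallel orthonormal fields $E_1,\dots,E_{n-1}\perp\dot\gamma$ and test fields $V_i=\phi E_i$ with $\phi(0)=\phi(L)=0$. Minimality forces the index form to be nonnegative, $\sum_i I(V_i,V_i)\ge0$, i.e.
\[ 0\le\int_0^L\Big((n-1)\,\phi'^2-\phi^2\,Ric(\dot\gamma,\dot\gamma)\Big)\,dt. \]
Substituting $Ric=Ric_f-Hess(f)$ from (2.10) and using $Hess(f)(\dot\gamma,\dot\gamma)=(f\circ\gamma)''$ along the geodesic turns this into
\[ 0\le\int_0^L\Big((n-1)\,\phi'^2-\phi^2\,Ric_f(\dot\gamma,\dot\gamma)+\phi^2\,(f\circ\gamma)''\Big)\,dt. \]
I would then integrate the weight term by parts twice; since $\phi$ and $(\phi^2)'=2\phi\phi'$ both vanish at the endpoints, the boundary contributions drop and $\int_0^L\phi^2(f\circ\gamma)''\,dt=\int_0^L(\phi^2)''(f\circ\gamma)\,dt$.

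Now $0\le f\circ\gamma\le\frac{k^2}{4}$ lets me bound the rearranged term by $\frac{k^2}{4}\int_0^L[(\phi^2)'']_+\,dt=\frac{k^2}{8}\int_0^L|(\phi^2)''|\,dt$ (the second equality from $\int_0^L(\phi^2)''\,dt=0$), using $f\le k^2/4$ where $(\phi^2)''\ge0$ and $f\ge0$ where $(\phi^2)''<0$. Taking $\phi(t)=\sin(\pi t/L)$ and inserting $Ric_f\ge\rho$, the remaining integrals are elementary and yield $\rho L^2\le(n-1)\pi^2+\pi k^2$, which is dominated by $(n-1)(n+k^2-1)\pi^2$; hence $L$ obeys the asserted bound and compactness follows. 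The \emph{main obstacle} is conceptual rather than computational: a lower bound on $Ric_f$ \emph{alone} cannot force compactness --- flat $R^n$, viewed as the self-shrinker with $f=\frac{|X|^2}{4}$, has $Ric_f\equiv\frac12>0$ yet is noncompact (and fails to lie in any ball) --- so the containment hypothesis must do real work, and it enters only through the term $\int_0^L\phi^2(f\circ\gamma)''\,dt$. Making that sign-indefinite term yield a finite length bound is the crux; it is the exact analogue of the sign conditions $(\omega_\alpha^2)',(\omega_\alpha^2)''\ge0$ that constrained the range in Theorem 3.1, and it is what pins down the final constant. An alternative implementation keeps the Jacobi comparison function $\omega_\alpha$ of Section 3 in place of $\sin(\pi t/L)$ and pushes the mean-curvature comparison (3.14) to the first zero of $\omega_\alpha$, where $\triangle r\to-\infty$ forces a cut point; the same weight estimate remains the heart of the matter.
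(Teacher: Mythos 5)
Your proposal is correct and follows essentially the same route as the paper's proof: a Bonnet--Myers second-variation argument with test fields $\sin(\pi t/L)E_i$, the splitting $Ric = Ric_f - Hess(f)$, a double integration by parts on the weight term, and the bound $0\le f\le \frac{k^2}{4}$ from the containment in $\overline{\mathcal{B}}^{n+1}_k(0)$. Your handling of the weight term via $\frac{k^2}{4}\int[(\phi^2)'']_+\,dt$ is marginally sharper than the paper's estimate $|\cos|\le 1$, but the argument and the resulting diameter bound are the same in substance.
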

\begin{proof}
(of Theorem 6.1) The proof goes by contradiction. If  $\gamma: [0,l] \rightarrow M$ is a unit speed geodesic of length $l >\sqrt{\frac{2(n-1)(n+k^2-1)}{1-\lambda^2-3\alpha^2}}\pi$.
 Let $\{E_i(t)\}_{i=1}^{n-1}$ be parallel orthonormal vector fields along
$\gamma $ which are orthogonal to $\dot{\gamma}$. Using vector fields $\{X_i(t)= \sin(\frac{\pi}{l}t)  E_i(t)\}_{i=1}^{n-1}$
along $\gamma $, then we get the index form
\begin{equation}
I(X_i(t),X_i(t))=\int^l_0 (|\nabla_{\dot{\gamma}}X_i|^2-<X_i,R_{X_i,\dot{\gamma}}\dot{\gamma}>)dt.
\end{equation}
Then by $|A|\leq \alpha < \sqrt{\frac{1-\lambda^2}{3}}$, $f = \frac{|X|^2}{4} \leq \frac{k^2}{4}$, $l >\sqrt{\frac{2(n-1)(n+k^2-1)}{1-\lambda^2-3\alpha^2}}\pi$,  (2.8) and  (2.10), we have
\begin{eqnarray}
& & \ \ \ \  \sum_{i=1}^{n-1}I(X_i(t),X_i(t))  \nonumber \\
& &=\int^l_0 \sum_{i=1}^{n-1}(|\nabla_{\dot{\gamma}}X_i|^2-<X_i,R_{X_i,\dot{\gamma}}\dot{\gamma}>)dt            \nonumber \\
& &=\int^l_0((n-1)(\frac{\pi}{l}cos(\frac{\pi}{l}t))^2- (\sin(\frac{\pi}{l}t))^2Ric(\dot{\gamma},\dot{\gamma}))dt\nonumber \\
& & \leq\int^l_0((n-1)(\frac{\pi}{l}cos(\frac{\pi}{l}t))^2- (\sin(\frac{\pi}{l}t))^2(\frac{1-\lambda^2-3\alpha^2}{2(n-1)}))dt\nonumber \\
& & \ \ \ \  +\int^l_0(\sin(\frac{\pi}{l}t))^2(Hess(f)(\dot{\gamma},\dot{\gamma}))dt \nonumber \\
& &=\frac{(n-1)\pi^2}{2l}-\frac{(1-\lambda^2-3\alpha^2)l}{4(n-1)}+\int^l_0(\sin(\frac{\pi}{l}t))^2\frac{\partial^2}{\partial t^2}(f\circ\gamma) dt\nonumber \\
& &=\frac{(n-1)\pi^2}{2l}-\frac{(1-\lambda^2-3\alpha^2)l}{4(n-1)}-\frac{\pi}{l}
\int^l_0\sin(\frac{2\pi}{l}t)\frac{\partial}{\partial t}(f\circ\gamma) dt\nonumber \\
& &=\frac{(n-1)\pi^2}{2l}-\frac{(1-\lambda^2-3\alpha^2)l}{4(n-1)}+ \frac{2\pi^2}{l^2}
\int^l_0\cos(\frac{2\pi}{l}t)(f\circ\gamma) dt\nonumber \\
& &\leq \frac{(n-1)\pi^2}{2l}-\frac{(1-\lambda^2-3\alpha^2)l}{4(n-1)}+ \frac{2\pi^2}{l^2}
\int^l_0\ |(f\circ\gamma)| dt\nonumber \\
& &\leq \frac{(n-1)\pi^2}{2l}-\frac{(1-\lambda^2-3\alpha^2)l}{4(n-1)}+ \frac{\pi^2k^2}{2l}
\nonumber \\
& &=\frac{(n+k^2-1)\pi^2}{2l}-\frac{(1-\lambda^2-3\alpha^2)l}{4(n-1)} \nonumber \\
& &< 0.\nonumber \\
\end{eqnarray}
This implies $I(X_i(t),X_i(t))< 0$, for some $1\leq i\leq n-1$. Namely, the index form  is not positive semi-definite. It is a contradiction. So we finish the proof.
\end{proof}

\section{Topology at infinity of $\lambda$-hypersurfaces }
In this section, by the following  Cheeger-Gromoll-Lichnerowicz splitting theorem, we obtain a bit of information on the topology at infinity of a bounded $\lambda$-hypersurfaces with $Ric_f\geq 0$.
\begin{lem}
(see[17])Let $(M^n, dvol_f)$ be a geodesically complete weighted manifold with $Ric_f\geq 0$ for some
bounded function $f$ and $M^n$ contains a line, then $M^n = N^{n-1} \times R$ and $f$ is constant along the line.
\end{lem}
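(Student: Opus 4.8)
The plan is to adapt the Cheeger--Gromoll splitting argument to the weighted setting, the extra ingredients being the weighted Bochner formula and a weighted Laplacian comparison in which the boundedness of $f$ is decisive. Let $\gamma:R\to M^n$ be the given line. Split it at $\gamma(0)$ into the two rays $\gamma^{\pm}$ and form the associated Busemann functions
\begin{equation}
b^{\pm}(x)=\lim_{t\to+\infty}(t-d(x,\gamma^{\pm}(t))).
\end{equation}
Since $\gamma$ is minimizing, the triangle inequality gives $b^{+}+b^{-}\leq 0$, with equality along $\gamma$. The heart of the matter is to show that $b^{+}$ is $f$-harmonic, i.e. $\triangle_f b^{+}=0$; the splitting is then read off from the Bochner identity.

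First I would show that each $b^{\pm}$ is $f$-subharmonic. This rests on the weighted Laplacian comparison for the distance function: under $Ric_f\geq 0$ together with $|f|\leq a<\infty$, the drifted mean curvature of geodesic spheres satisfies a bound of the form $\triangle_f r\leq \frac{n-1+4a}{r}$, so that $\triangle_f r\to 0$ as $r\to\infty$. Applying this along the rays $\gamma^{\pm}(t)$ and letting $t\to\infty$ yields $\triangle_f b^{\pm}\geq 0$ in the barrier (support) sense. The boundedness hypothesis on $f$ is essential here: for $Ric_f\geq 0$ alone the Gaussian soliton $R^n$ with $f=\frac{|X|^2}{4}$ has $Ric_f=\frac{1}{2}>0$ yet admits no such splitting, which shows the comparison must fail without a bound on $f$.

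Next, $u:=b^{+}+b^{-}$ is $f$-subharmonic, satisfies $u\leq 0$ with $u=0$ along $\gamma$, so $\gamma(0)$ is an interior maximum; the strong maximum principle for $\triangle_f$ then forces $u\equiv 0$. Hence $b^{+}=-b^{-}$ is simultaneously $f$-sub- and $f$-superharmonic, so $\triangle_f b^{+}=0$, and elliptic regularity makes $b^{+}$ smooth with $|\nabla b^{+}|\equiv 1$. I would then invoke the weighted Bochner formula
\begin{equation}
\frac{1}{2}\triangle_f|\nabla b^{+}|^2=|Hess(b^{+})|^2+<\nabla b^{+},\nabla(\triangle_f b^{+})>+Ric_f(\nabla b^{+},\nabla b^{+}).
\end{equation}
The left-hand side vanishes because $|\nabla b^{+}|\equiv 1$, the middle term vanishes because $\triangle_f b^{+}=0$, and $Ric_f\geq 0$; therefore $|Hess(b^{+})|^2=0$ and $Ric_f(\nabla b^{+},\nabla b^{+})=0$. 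Thus $\nabla b^{+}$ is a parallel unit vector field, its integral curves are geodesics, and the level sets of $b^{+}$ are totally geodesic; the flow of $\nabla b^{+}$ then furnishes the isometric splitting $M^n=N^{n-1}\times R$ with the $R$-factor tangent to $\nabla b^{+}$.

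Finally, for the constancy of $f$ along the line: $Hess(b^{+})=0$ gives $\triangle b^{+}=0$, and combining with $\triangle_f b^{+}=\triangle b^{+}-<\nabla f,\nabla b^{+}>=0$ yields $<\nabla f,\nabla b^{+}>=0$. Since $\nabla b^{+}$ is the unit field tangent to the $R$-direction, $\frac{d}{dt}(f\circ\sigma)=0$ along each integral curve $\sigma$, i.e. $f$ is constant along the line. I expect the main obstacle to be the $f$-subharmonicity step: the Busemann function is a priori only Lipschitz, so the comparison must be argued in the support/barrier sense via the Calabi trick, and, more substantively, it relies on the sharp weighted Laplacian comparison that holds for bounded $f$ but genuinely fails for general $Ric_f\geq 0$.
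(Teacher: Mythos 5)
The paper gives no proof of this lemma at all --- it is quoted directly from Lichnerowicz [17] (see also Wei--Wylie [33]) --- and your argument is precisely the standard proof behind that citation: the weighted Laplacian comparison $\triangle_f r\leq (n-1+4a)/r$ for $|f|\leq a$ (which is indeed where boundedness of $f$ is decisive, as your Gaussian-soliton remark correctly illustrates), barrier-sense $f$-subharmonicity of the Busemann functions via the Calabi trick, the strong maximum principle applied to $b^{+}+b^{-}$, and the weighted Bochner formula to force $Hess(b^{+})=0$ and $<\nabla f,\nabla b^{+}>=0$. Your proof is correct; the only step stated too quickly is $|\nabla b^{+}|\equiv 1$, which does not follow from elliptic regularity alone but from the Lipschitz bound $|\nabla b^{+}|\leq 1$, equality along $\gamma$, and one more application of the strong maximum principle to the $f$-subharmonic function $|\nabla b^{+}|^{2}$ (its $f$-subharmonicity being itself a consequence of the Bochner formula and $Ric_f\geq 0$).
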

\begin{thm}
Let $X : M \rightarrow \overline{\mathcal{B}}^{n+1}_{k} (0) \subset R^{n+1}$ be  an $n$-dimensional complete non-compact $\lambda$-hypersurface with $|A|\leq \sqrt{\frac{1-\lambda^2}{3}}$. Then $M$ does not contain a line. In particular, $M$ is connected at infinity, i.e., $M$ has only one end.
\end{thm}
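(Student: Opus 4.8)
The plan is to argue by contradiction through the weighted splitting theorem (Lemma 7.1). First I would check its hypotheses: the pinching $|A|\le\sqrt{(1-\lambda^2)/3}$ gives $Ric_f\ge0$ by the lower bound (2.12), and the containment $X(M)\subset\overline{\mathcal B}^{n+1}_k(0)$ makes $f=\tfrac{|X|^2}{4}\le\tfrac{k^2}{4}$ bounded. Hence if $M$ contained a line, Lemma 7.1 would produce an isometric splitting $M=N^{n-1}\times R$ with $f$ constant along the $R$-factor. Writing $\partial_t$ for the unit parallel field tangent to the $R$-lines, the product structure gives $Ric(\partial_t,\partial_t)=0$, and $f$ being constant along $R$ together with $\partial_t$ parallel gives $\mathrm{Hess}(f)(\partial_t,\partial_t)=0$; therefore $Ric_f(\partial_t,\partial_t)=0$ identically.

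Next I would substitute this into the exact identity for $Ric_f$ computed in (2.11), namely $(Ric_f)_{ij}=\tfrac12\delta_{ij}-\lambda h_{ij}-h_{il}h_{lj}$. In an orthonormal frame with $e_1=\partial_t$ the vanishing $Ric_f(\partial_t,\partial_t)=0$ becomes $\lambda h_{11}+\sum_l h_{1l}^2=\tfrac12$. Using $|h_{11}|\le\big(\sum_l h_{1l}^2\big)^{1/2}\le|A|\le\sqrt{(1-\lambda^2)/3}$ and the monotonicity of $t\mapsto|\lambda|t+t^2$, this forces
\[
\tfrac12=\lambda h_{11}+\sum_l h_{1l}^2\le|\lambda|\sqrt{\tfrac{1-\lambda^2}{3}}+\tfrac{1-\lambda^2}{3}.
\]
The elementary inequality $|\lambda|\sqrt{(1-\lambda^2)/3}+(1-\lambda^2)/3\le\tfrac12$, which is equivalent to $(1-4\lambda^2)^2\ge0$, shows the right-hand side never exceeds $\tfrac12$ and equals it only when $\lambda^2=\tfrac14$. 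Consequently, whenever $\lambda^2\ne\tfrac14$ the inequality is strict and we reach the contradiction $\tfrac12<\tfrac12$ at once.

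The remaining borderline $\lambda^2=\tfrac14$ is what I expect to be the main obstacle, precisely because the model here is the genuine $\lambda$-hypersurface $S^1(2)\times R^{n-1}$ (with $\lambda=-\tfrac12$, realizing $|A|=\sqrt{(1-\lambda^2)/3}$ with equality) which does contain lines. In this case every inequality above must be an equality, so $|A|^2=h_{11}^2$ with $|h_{11}|=\tfrac12$; that is, $A$ is the rank-one tensor $h_{11}\,\partial_t^{\flat}\!\otimes\partial_t^{\flat}$, and in particular the second fundamental form vanishes on the orthogonal complement of $\partial_t$. This is exactly where boundedness must enter. The slice $N^{n-1}\times\{t_0\}$ is totally geodesic in the product $M$, and since $A|_N=0$ it is also totally geodesic in $R^{n+1}$; being complete, it must therefore be a full affine $(n-1)$-plane, which is unbounded for $n\ge2$, contradicting $X(M)\subset\overline{\mathcal B}^{n+1}_k(0)$. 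Thus $M$ cannot contain a line. Finally, since a complete non-compact manifold with two or more ends always contains a line (build one from minimizing rays to two distinct ends), the absence of lines forces $M$ to have a single end, i.e.\ to be connected at infinity.
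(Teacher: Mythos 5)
Your proposal is correct, but after the common opening move (Lemma 7.1 applied with $Ric_f\ge 0$ and $f=\tfrac{|X|^2}{4}\le\tfrac{k^2}{4}$ bounded, yielding $Ric_f(\partial_t,\partial_t)=0$ along the split factor) it diverges genuinely from the paper's argument. The paper never works pointwise with the exact identity $(Ric_f)_{ij}=\tfrac12\delta_{ij}-\lambda h_{ij}-h_{il}h_{lj}$; instead it derives a Simons-type equation $\tfrac12\triangle_f|A|^2\ge\sum h_{ijk}^2+\bigl(\tfrac{1-\lambda^2}{2}-\tfrac32|A|^2\bigr)|A|^2\ge0$, applies the strong maximum principle for the drifted Laplacian to get the dichotomy $|A|<\sqrt{(1-\lambda^2)/3}$ everywhere or $|A|\equiv\sqrt{(1-\lambda^2)/3}$, rules out the first case because the strict lower bound (2.12) contradicts $Ric_f(\partial_t,\partial_t)=0$, and in the second case uses $\nabla A\equiv0$ and Lawson's classification to identify $M$ as a round cylinder, excluded by boundedness. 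You replace all of that machinery with the elementary estimate $\tfrac12=\lambda h_{11}+\sum_l h_{1l}^2\le|\lambda|\alpha+\alpha^2\le\tfrac12$ (equality only when $\lambda^2=\tfrac14$), together with the observation that in the equality case $A$ is rank one along $\partial_t$, so the slices $N^{n-1}\times\{t_0\}$ are totally geodesic in $R^{n+1}$ and hence unbounded affine planes. Your route is more self-contained (no Simons equation, no maximum principle, no classification theorem) and has the bonus of locating the borderline values $\lambda=\pm\tfrac12$ explicitly; the paper's route, in exchange, establishes the global dichotomy for $|A|$, which is the same engine reused in Section 8. One shared caveat: your final step, like the paper's cylinder argument, implicitly needs $n\ge2$ (for $n=1$ the equality model is the multiply covered circle $S^1(2)$, which is bounded), but the paper does not address this edge case either.
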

\begin{proof}
(of Theorem 7.2) The proof goes by contradiction.  By $|A|\leq \sqrt{\frac{1-\lambda^2}{3}}$, we have $Ric_f\geq 0$. If $M$ contains a line, by Lemma 7.1, $M$ can be split isometrically as the Riemannian product $(N^{n-1} \times R, g_N + dt\otimes dt)$, and $f$ is constant along the line. Hence,
\begin{equation}
Hess(f)(\frac{\partial}{\partial t}, \frac{\partial}{\partial t}) = 0.
\end{equation}
On the other hand, $H-\frac{<N,X>}{2}=\lambda$ implies that
\begin{equation}
H_i=-\frac{1}{2}\sum_jh_{ij}<X,e_j>,
\end{equation}
and
\begin{equation}
H_{ik}=-\frac{1}{2}(\sum_j(h_{ijk}<X,e_j>+h_{ik}+\sum_jh_{ij}h_{kj}(2H-2\lambda)).
\end{equation}
Therefore,
\begin{eqnarray}
& &\triangle h_{ij}=-|A|^2h_{ij}-H h_{ik}h_{kj}- H_{ij}         \nonumber \\
& &\ \ \ \ \ \ \  =-|A|^2h_{ij}-H h_{ik}h_{kj}+\frac{1}{2}(\sum_k(h_{ikj}<X,e_k>+h_{ij}    \nonumber \\
& &\ \ \ \ \ \ \ \ \ \ +\sum_k h_{ik}h_{jk}(2H-2\lambda))     \nonumber \\
& &\ \ \ \ \ \ \  =-|A|^2h_{ij}+\sum_kh_{ikj}<\frac{X}{2},e_k>+\frac{1}{2}h_{ij}-\lambda\sum_k h_{ik}h_{jk}.    \nonumber \\
\end{eqnarray}
Then it follows that
\begin{equation}
\triangle_f h_{ij}=(\frac{1}{2}-|A|^2)h_{ij}-\lambda\sum_k h_{ik}h_{jk}.
\end{equation}
By $\frac{1}{2}\triangle_f\eta^2=|\nabla \eta|^2+\eta\triangle_f\eta$, we have
\begin{equation}
\frac{1}{2}\triangle_f |A|^2=\sum_{ijk}h_{ijk}^2+(\frac{1}{2}-|A|^2)|A|^2-\lambda\sum_{ijk} h_{ij}h_{jk}h_{ki}.
\end{equation}
Then, by $|A|\leq \sqrt{\frac{1-\lambda^2}{3}}$, we have
\begin{eqnarray}
& &\frac{1}{2}\triangle_f |A|^2\geq\sum_{ijk}h_{ijk}^2+(\frac{1}{2}-|A|^2)|A|^2-|\lambda||A|^3      \nonumber \\
& &\ \ \ \ \ \ \ \ \ \ \ \  =\sum_{ijk}h_{ijk}^2+(\frac{1}{2}-|\lambda||A|-|A|^2)|A|^2     \nonumber \\
& &\ \ \ \ \ \ \ \ \ \ \ \  \geq\sum_{ijk}h_{ijk}^2+(\frac{1-\lambda^2}{2}-\frac{3}{2}|A|^2)|A|^2     \nonumber \\
& &\ \ \ \ \ \ \ \ \ \ \ \  \geq0. \nonumber \\
\end{eqnarray}
Hence, the strong maximum principle for the drifted Laplacian yields that either  $|A| < \sqrt{\frac{1-\lambda^2}{3}} $ or $|A| \equiv \sqrt{\frac{1-\lambda^2}{3}}$.\\
Case 1: If $|A| < \sqrt{\frac{1-\lambda^2}{3}},$ then $|A|=0$ and
\begin{equation}
Hess(f)(\frac{\partial}{\partial t}, \frac{\partial}{\partial t})=Ric_f(\frac{\partial}{\partial t}, \frac{\partial}{\partial t})\geq \frac{(1-\lambda^2-3|A|^2)}{2} >0 ,
\end{equation}
which contradicts (7.1).\\
Case 2: If $|A| \equiv \sqrt{\frac{1-\lambda^2}{3}},$ then $h_{ijk}=0.$ By the classical Lawson's classification theorem, $M$ is  a cylindrical
product $S^k\times R^{n-k}$. Since the $\lambda$-hypersurface is bounded,
we conclude that $M=S^n$, which contradicts the assumption that $M$ is not compact.
Then it completes the proof.
\end{proof}

\section{Compactness of $\lambda$-hypersurfaces}
In this section, we will follow the notations and conclusions of [4]. Then the technique of the Feller property combining with the stochastic completeness, will enable us to get natural and general restrictions that force the $\lambda$-hypersurface to be compact.
\begin{thm}
Let $X : M \rightarrow \overline{\mathcal{B}}^{n+1}_{k} (0) \subset R^{n+1}$ be  an $n$-dimensional complete non-compact $\lambda$-hypersurface. Then we have
\begin{equation}
\lim_{r\rightarrow \infty}\sup_{M\backslash B(r)}|A|\geq \sqrt{\frac{1-\lambda^2}{3}},
\end{equation}
where $B(r)$ is  the geodesic ball of $\lambda$-hypersurface $M^n$ with radius $r$.
\end{thm}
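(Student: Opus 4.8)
The plan is to prove the contrapositive: if the stated limit is strictly below $\sqrt{\frac{1-\lambda^2}{3}}$, then $M$ must be compact, contradicting the non-compactness hypothesis. First I would clear the trivial cases. If $|\lambda|\geq 1$ the right-hand side is non-positive and the inequality holds automatically since $|A|\geq 0$; and if $\sup_{M\setminus B(r)}|A|=+\infty$ for every $r$, the limit is $+\infty$ and there is nothing to prove. So assume $|\lambda|<1$ and that $L:=\lim_{r\to\infty}\sup_{M\setminus B(r)}|A|$ is finite (the limit exists because $r\mapsto\sup_{M\setminus B(r)}|A|$ is nonincreasing). Supposing $L<\sqrt{\frac{1-\lambda^2}{3}}$, choose $\alpha$ with $L<\alpha<\sqrt{\frac{1-\lambda^2}{3}}$ and a radius $r_0$ so that $|A|\leq\alpha$ on the exterior region $E:=\{x\in M:d(o,x)\geq r_0\}$, where $o$ is the fixed center of the exhausting balls. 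On $E$, the estimate (2.12) yields the positive lower bound $Ric_f\geq\frac{1-\lambda^2-3\alpha^2}{2}=:c>0$.

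My main idea is to run the index-form computation of Theorem 6.1 not on an arbitrary geodesic but on a far-out minimizing segment of a ray, thereby localizing Myers' argument to the end of $M$. Since $M$ is complete and non-compact, there is a ray $\gamma:[0,\infty)\to M$ issuing from $o$; it is minimizing on every subinterval and satisfies $d(o,\gamma(t))=t$, so $\gamma(t)\in E$ for all $t\geq r_0$. Fix a length $l>\pi\sqrt{\frac{2(n-1)(n+k^2-1)}{1-\lambda^2-3\alpha^2}}$ and consider the segment $\sigma=\gamma|_{[r_0,\,r_0+l]}$, which lies entirely in the good region $E$ and is still minimizing. Testing the index form along $\sigma$ with the fields $X_i(t)=\sin\!\big(\tfrac{\pi}{l}(t-r_0)\big)E_i(t)$, where $\{E_i\}$ are parallel orthonormal and orthogonal to $\dot\gamma$, the computation is verbatim that of Theorem 6.1 (using $Ric_f\geq c$ along $\sigma$ and $f\leq\frac{k^2}{4}$, which holds on all of $M$), and gives
\[
\sum_{i=1}^{n-1}I(X_i,X_i)\leq \frac{(n+k^2-1)\pi^2}{2l}-\frac{(1-\lambda^2-3\alpha^2)l}{4(n-1)}<0
\]
by the choice of $l$. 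Hence $I(X_i,X_i)<0$ for some $i$, so $\sigma$ is not minimizing, a contradiction. Therefore no such ray exists, $M$ is compact, and the assumption $L<\sqrt{\frac{1-\lambda^2}{3}}$ is untenable.

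The only genuinely delicate point in this route is the localization: one must guarantee that the test segment lies where the curvature bound is available, which is exactly why I base the ray at the center $o$ of the balls defining $\sup_{M\setminus B(r)}|A|$, so that $d(o,\gamma(t))=t$ forces all of $\sigma$ into $E$. Everything else is the second-variation estimate already carried out in Theorem 6.1.

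Alternatively, and this is the route the present section is set up for, one can argue analytically in the spirit of [4]. Because $f=\frac{|X|^2}{4}$ is bounded by $\frac{k^2}{4}$ and $Ric_f$ is bounded below (it is $\geq c>0$ on $E$ and bounded below on the compact set $\overline{B(r_0)}$), the weighted manifold $(M,e^{-f}dvol)$ is both stochastically complete and Feller. One then feeds the differential inequality for $|A|^2$ from Section 7, namely $\Delta_f|A|^2\geq(1-\lambda^2-3\alpha^2)|A|^2$ on $E$, into the machinery of [4], combining the weak maximum principle at infinity (stochastic completeness) with the decay of exterior solutions (the Feller property) to rule out a non-compact end carrying $Ric_f\geq c>0$. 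In this route the main obstacle is precisely making that compactness criterion rigorous, i.e. upgrading ``positive $\infty$-Bakry-Emery Ricci outside a compact set'' to global compactness, whereas in the ray argument above the same conclusion falls out by hand from the index form.
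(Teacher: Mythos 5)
Your main argument is correct, but it is a genuinely different proof from the one in the paper. The paper argues analytically: assuming $\limsup_{x\to\infty}|A|<\sqrt{(1-\lambda^2)/3}$, it invokes Theorems 7 and 8 of [4] (using $|\nabla f|\leq k/2$ and the lower bound (2.12) on $Ric_f$) to get stochastic completeness and the Feller property for $\triangle_f$, feeds the Simons-type inequality $\frac12\triangle_f|A|^2\geq(\frac12-|\lambda|\alpha-\alpha^2)|A|^2$ (valid outside a compact set) into Theorem 2 of [4] to conclude the stronger statement $|A|(x)\to 0$ at infinity, and then uses $Hess(f)_{ij}=\frac12(\delta_{ij}+\langle X,N\rangle h_{ij})\geq\frac14$ far out along a ray to force $|X|^2\to\infty$, contradicting $M\subset\overline{\mathcal{B}}^{n+1}_k(0)$. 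Your route instead localizes the Myers argument of Theorem 6.1 to the end: on the exterior region $E$ where $|A|\leq\alpha$ you have $Ric_f\geq\frac{1-\lambda^2-3\alpha^2}{2}>0$, a ray from the center $o$ has all its far segments inside $E$ and minimizing, and the second-variation computation (second variation of $Ric=Ric_f-Hess(f)$ plus two integrations by parts against the global bound $f\leq k^2/4$) makes the index form negative on a long enough segment --- contradicting minimality. This is self-contained and elementary, avoiding the stochastic-completeness/Feller machinery entirely, at the cost of not obtaining the decay $|A|\to 0$ that the paper's method yields as a byproduct; your correct observation that the ray must be based at the center of the exhausting balls is exactly the point that makes the localization work. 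Two minor remarks: your second, ``alternative'' sketch is essentially the paper's actual proof, but as you note it is left incomplete there (the paper closes it not by a compactness criterion for ends but by the $Hess(f)\geq\frac14$ integration along a ray); and in the case $|\lambda|\geq 1$ the right-hand side is not merely non-positive but non-real, so the statement is vacuous there rather than trivially true --- this does not affect your argument.
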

\begin{proof}
(of Theorem 8.1) By contradiction. Suppose that $\displaystyle\lim_{r\rightarrow \infty}\sup_{M\backslash B(r)}|A|< \sqrt{\frac{1-\lambda^2}{3}}$, we have $|A| \in L^\infty(M)$.  Since $|\nabla f|=\frac{1}{2}|X^T|\leq\frac{1}{2}|X|<\frac{1}{2}k$,
then by (2.10), Theorem 7  and  Theorem 8 in $[4]$, $M$ is stochastically complete and Feller with respect to $\triangle_f$.
By the Simons type equation (7.7), we have, for some $\alpha$,
\begin{equation}
\frac{1}{2}\triangle_f |A|^2\geq(\frac{1}{2}-|\lambda|\alpha-\alpha^2)|A|^2,
\end{equation}
outside a smooth domain $\Omega\subset\subset M$. Then Theorem 2 in [4] gives
\begin{equation}
|A|(x) \rightarrow 0, \mbox{\ as\ }  x \rightarrow \infty.
\end{equation}
By (8.3) and $Hess(f)_{ij}=\frac{1}{2}(\delta_{ij}+<X,N>h_{ij})$, we have
\begin{equation}
\frac{d^2}{dt^2}(f\circ \gamma)(t)=Hess(f)(\dot{\gamma}, \dot{\gamma})\geq \frac{1}{4},
\end{equation}
for some ray $\gamma: [0, +\infty) \rightarrow M$, and $t >>1$.
It follows by integration that $|X|^2\rightarrow +\infty$ along $\gamma$, therefore, $M$ is
unbounded. This is a contradiction. So we finish the proof.
\end{proof}
By Theorem 8.1, the following corollary is obvious.
\begin{cor}
Let $X : M \rightarrow \overline{\mathcal{B}}^{n+1}_{k} (0) \subset R^{n+1}$ be  an $n$-dimensional complete  $\lambda$-hypersurface with
\begin{equation}
\lim_{r\rightarrow \infty}\sup_{M\backslash B(r)}|A|< \sqrt{\frac{1-\lambda^2}{3}},
\end{equation}
where $B(r)$ is  the geodesic ball of $\lambda$-hypersurface $M^n$ with radius $r$. Then $M$ is compact.
\end{cor}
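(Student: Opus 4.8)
The plan is to argue by contradiction, converting a hypothetical decay of $|A|$ into an inadmissible growth of $|X|$. Suppose the conclusion fails, so that
\[
\alpha := \lim_{r\to\infty}\sup_{M\backslash B(r)}|A| < \sqrt{\frac{1-\lambda^2}{3}}.
\]
Then $|A|$ is globally bounded, and since $M \subset \overline{\mathcal{B}}^{n+1}_k(0)$ we have $|\nabla f| = \frac{1}{2}|X^T| \le \frac{1}{2}|X| \le \frac{k}{2}$, so the weighted manifold $(M,\triangle_f)$ has bounded drift. First I would record the Simons-type identity (7.7),
\[
\frac{1}{2}\triangle_f|A|^2 = \sum_{ijk}h_{ijk}^2 + \left(\frac{1}{2}-|A|^2\right)|A|^2 - \lambda\sum_{ijk}h_{ij}h_{jk}h_{ki},
\]
and then turn it into a differential inequality valid near infinity.

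Next, discarding the nonnegative gradient term and estimating the cubic term by $|\lambda|\,|A|^3$, outside a sufficiently large geodesic ball $\Omega$ where $\sup|A|$ can be taken arbitrarily close to $\alpha$, I would obtain
\[
\frac{1}{2}\triangle_f|A|^2 \ge \left(\frac{1}{2}-|\lambda|\alpha-\alpha^2\right)|A|^2.
\]
The role of the hypothesis $\alpha < \sqrt{(1-\lambda^2)/3}$ is precisely to make this coefficient positive, since $\frac{1}{2}-|\lambda|\alpha-\alpha^2 \ge \frac{1}{2}(1-\lambda^2-3\alpha^2) > 0$. Here the analytic machinery of [4] is essential: the bounded drift together with $|A|\in L^\infty(M)$ makes $\triangle_f$ stochastically complete and Feller (Theorems 7 and 8 of [4]), and the positivity of the coefficient in this subsolution inequality forces, via Theorem 2 of [4], the pointwise decay $|A|(x)\to 0$ as $x\to\infty$.

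Finally I would turn the curvature decay against the boundedness. Using $Hess(f)_{ij} = \frac{1}{2}(\delta_{ij}+\langle X,N\rangle h_{ij})$ together with $|\langle X,N\rangle|\le k$, the decay $|A|\to 0$ yields $Hess(f)(\dot\gamma,\dot\gamma) \ge \frac{1}{4}$ along some ray $\gamma$ for all large $t$; integrating $\frac{d^2}{dt^2}(f\circ\gamma) = Hess(f)(\dot\gamma,\dot\gamma)$ twice shows $f = |X|^2/4 \to +\infty$ along $\gamma$, so $M$ cannot be contained in $\overline{\mathcal{B}}^{n+1}_k(0)$. This contradiction proves the asserted lower bound on the asymptotic supremum of $|A|$.

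The step I expect to be the main obstacle is the passage from the subsolution inequality to the pointwise vanishing $|A|\to 0$: one must genuinely verify that the geometry supplies stochastic completeness and the Feller property---this is where the boundedness of $M$, hence of $\nabla f$, is indispensable---and then invoke the corresponding vanishing theorem of [4]. By comparison, the algebraic reduction of (7.7) beforehand and the one-dimensional integration afterward are routine.
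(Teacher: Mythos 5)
Your argument is correct and is essentially the paper's own: the corollary is just the contrapositive of Theorem 8.1, and you reproduce that theorem's proof verbatim (bounded $|\nabla f|$ and $|A|\in L^\infty$ give stochastic completeness and the Feller property via [4], the Simons-type inequality plus Theorem 2 of [4] forces $|A|\to 0$, and then $Hess(f)(\dot\gamma,\dot\gamma)\ge\frac{1}{4}$ along a ray contradicts boundedness). The only cosmetic slip is the phrase ``suppose the conclusion fails, so that $\alpha<\sqrt{(1-\lambda^2)/3}$'' --- that inequality is the hypothesis of the corollary, and what you actually negate is compactness of $M$; the mathematics is otherwise the paper's proof of Theorem 8.1.
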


\end{document}